\newtheorem{theorem}{Theorem}[section]
\newtheorem{cor}[theorem]{Corollary}
\newtheorem{lem}[theorem]{Lemma}
\newtheorem{prop}[theorem]{Proposition}
\theoremstyle{definition}
\newtheorem{defin}[theorem]{Definition}
\theoremstyle{remark}
\newtheorem{rems}[theorem]{Remarks}
\newcommand{\EE}{\mathcal{E}}
\newcommand{\HH}{\mathrm{H}}
\newcommand{\End}{\mathrm{End}}
\newcommand{\Hom}{\mathrm{Hom}}
\newcommand{\bB}{\mathbb{B}}
\newcommand{\bP}{\mathbb{P}}
\newcommand{\bZ}{\mathbb{Z}}
\newcommand{\bR}{\mathbb{R}}
\newcommand{\bC}{\mathbb{C}}
\newcommand{\cA}{\mathcal{A}}
\newcommand{\cC}{\mathcal{C}}
\newcommand{\cO}{\mathcal{O}}
\newcommand{\cE}{\mathcal{E}}
\newcommand{\cF}{\mathcal{F}}
\newcommand{\cH}{\mathcal{H}}
\newcommand{\cI}{\mathcal{I}}
\newcommand{\cL}{\mathcal{L}}
\newcommand{\cV}{\mathcal{V}}
\newcommand{\arrow}{\rightarrow}
\newcommand{\Id}{\mathrm{Id}}
\newcommand{\Sym}{\mathrm{Sym}}
\newcommand{\Res}{\mathrm{Res}}
\newcommand{\sbt}{\,\begin{picture}(-1,1)(-1,-3)\circle*{3}\end{picture}\ }
\newcommand{\Gr}{\mathrm{Gr}}
\title[]{Semi-positivity from Higgs bundles}
\author{Yohan Brunebarbe}
\date{\today}
\begin{document}

\maketitle

\begin{abstract}
We prove a generalization of the Fujita-Kawamata-Zuo semi-positivity Theorem \cite{Fujita78, Kawamata81, Zuo_neg} for filtered regular meromorphic Higgs bundles and tame harmonic bundles. Our approach gives a new proof in the cases already considered by these authors. We give also an application to the geometry of smooth quasi-projective complex varieties admitting a semisimple complex local system with infinite monodromy.
 \end{abstract}

\section{Statement of the main results}

If $X$ is a smooth projective complex variety and $D = \cup_{i \in I} D_i \subset X$ is a simple normal crossing divisor, recall that a filtered regular meromorphic Higgs bundle $(\underline{\cE}, \theta)$ on $(X,D)$ consists of a parabolic bundle $\underline{\cE}$ together with a Higgs field $\theta$ which is logarithmic with respect to the parabolic structure along $D$, meaning that for any parabolic weight $\alpha \in {\bR}^I$ we have
$$\theta :  \cE^{\alpha} \arrow  \cE^{\alpha} \otimes \Omega^1_X(\log D). $$
See section \ref{non-abelian Hodge correspondence} for a reminder. We denote by $\cE^{\diamond}$ the vector bundle  in $\underline{\cE}$ with parabolic weight $(0, \cdots, 0) \in \bR^I$. \\

Our first result is a generalization of Fujita-Kawamata-Zuo semi-positivity Theorem \cite{Fujita78, Kawamata81, Zuo_neg, Brunebarbe_Crelle}, see Remarks \ref{rem_Fujita_Kawamata} for a discussion of former results. 
\begin{theorem}\label{main result}
Let $X$ be a smooth projective complex variety, $\cL$ be an ample line bundle on $X$ and $D \subset X$ be a simple normal crossing divisor. Let $(\underline {\cE}, \theta)$ be a filtered regular meromorphic Higgs bundle on $(X,D)$. Assume that $(\underline {\cE}, \theta)$ is $\mu_{\cL}$-polystable with vanishing parabolic Chern classes.
If $\cA$ is a subsheaf of $\cE^{\diamond}$ contained in the kernel of $\theta$, then its dual ${\cA}^{\star}$ is weakly positive.
\end{theorem}

The somewhat technical definition of a weakly positive torsion-free coherent sheaf, which is a generalisation of the notion of pseudo-effective line bundles for higher-rank sheaves, is recalled in section \ref{Weakly positive torsion-free sheaves}. We will show more precisely that the restricted base locus $\bB_-(\cA^{\star})$ of $\cA^{\star}$ is contained in the union of $ D$ and the locus in $X$ where $ \cA$ is not a locally split subsheaf of $\cE^{\diamond} $, cf. Theorem \ref{thm_restricted_base_locus}. \\

In the special case where the parabolic filtration is trivial, one has the following more precise result:

\begin{theorem}\label{trivial_parabolic_filtration}
Let $X$ be a smooth projective complex variety, $\cL$ be an ample line bundle on $X$ and $D \subset X$ be a simple normal crossing divisor. Let $(\cE, \theta)$ be a $\mu_{\cL}$-polystable logarithmic Higgs bundle on $(X,D)$ with vanishing Chern classes.
If $\cA$ is a locally split subsheaf of $\cE$ contained in the kernel of $\theta$, then its dual $\cA^{\star}$ is nef.
\end{theorem}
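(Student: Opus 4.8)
The plan is to combine the non-abelian Hodge correspondence with a curvature computation and to test nefness curve by curve. By the non-abelian Hodge correspondence for $\mu_{\cL}$-polystable logarithmic Higgs bundles with vanishing Chern classes (Simpson--Mochizuki), the pair $(\cE, \theta)$ underlies a tame harmonic bundle: there is a harmonic metric $h$ on $\cE|_{X \setminus D}$ whose Chern connection $D_h$ satisfies Hitchin's equation $F_{D_h} = -[\theta \wedge \theta^{\dagger}]$, where $\theta^{\dagger}$ is the $h$-adjoint of $\theta$. Since $\cA$ is locally split in $\cE$, it is a genuine holomorphic subbundle, so $\cA^{\star}$ is a vector bundle, and it suffices to show that for every smooth projective curve $C$ and every non-constant morphism $f \colon C \to X$ with $f(C) \not\subseteq D$, every holomorphic sub-line-bundle $M \subseteq f^{\ast}\cA$ satisfies $\deg M \le 0$ (equivalently, every quotient line bundle of $f^{\ast}\cA^{\star}$ has non-negative degree).

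First I would establish Griffiths semi-negativity of $\cA$ away from $D$. For a local holomorphic section $v$ of $\Ker \theta$ and a tangent direction $\xi$, the Hitchin equation gives $\langle F_{D_h}(\xi, \bar\xi)\, v, v\rangle_h = -\,\lvert \theta^{\dagger}_{\bar\xi} v\rvert_h^2 \le 0$, since $\theta_{\xi} v = 0$; thus $(\cE, h)$ is Griffiths semi-negative in every direction along $\cA$. Because $\cA$ is a holomorphic subbundle endowed with the induced metric, its curvature decreases further by the norm-square of the second fundamental form, so $(\cA, h|_{\cA})$ is Griffiths semi-negative on $X \setminus D$. Pulling back along $f$ (tame harmonic bundles pull back to tame harmonic bundles), $f^{\ast}\cA$ acquires an induced metric of semi-negative curvature on $C \setminus f^{-1}(D)$, and any sub-line-bundle $M$ inherits a metric whose curvature is $\le 0$ there; integrating, the contribution to $\deg M$ coming from $C \setminus f^{-1}(D)$ is non-positive.

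The main obstacle is the behaviour at the finitely many points of $f^{-1}(D)$, where $h$ is singular. Here the triviality of the parabolic filtration is essential: the norm estimates for tame harmonic bundles control $\lvert\cdot\rvert_h$, and vanishing parabolic weights force the parabolic degree along $D$ to coincide with the ordinary degree, so that no positive mass can be concentrated at $f^{-1}(D)$. Concretely, I would bound the local degree contribution at each $p \in f^{-1}(D)$ by a non-positive boundary term governed by the order of growth of $\lvert\cdot\rvert_h$ restricted to $M$, using that with zero weights this growth is at most logarithmic and hence negligible after the residue/Gauss--Bonnet comparison. This yields $\deg M \le 0$ for every sub-line-bundle, and therefore $\cA^{\star}$ is nef. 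Alternatively, one may extract the global pseudo-effectivity of $\OO_{\bP(\cA^{\star})}(1)$ from the weak positivity already established in Theorem \ref{main result}, and then invoke the local splitting of $\cA$ together with the trivial parabolic structure to rule out any contribution of $D$ to the restricted base locus $\bB_-(\cA^{\star})$; this last point is exactly where the sharpening from weak positivity to nefness comes from.
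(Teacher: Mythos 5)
Your curvature computation on $X \setminus D$ is correct and is essentially the paper's own first step (it is the content of the plurisubharmonicity lemma in the proof of Theorem \ref{semi-negativity for harmonic bundles}): for $v$ in $\ker\theta$ the Hitchin equation gives Griffiths semi-negativity of $(\cE,h)$ in the directions of $\cA$, and the curvature only decreases on the subbundle. The genuine gap is in your reduction of nefness to a curve-by-curve test. You propose to verify $\deg M \le 0$ only for sub-line-bundles of $f^{\ast}\cA$ along curves $f \colon C \to X$ with $f(C) \not\subseteq D$. That does \emph{not} suffice: nefness must be tested against \emph{all} curves, including those contained in $D$, and positivity on curves off $D$ says nothing about curves inside $D$. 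A standard example: if $D$ is a $(-1)$-curve on a surface $X$, then $\cO_X(D)$ has non-negative degree on every curve not contained in $D$, yet it is not nef. Your analytic mechanism cannot be repaired within its own terms, because the harmonic metric $h$ lives only on $X \setminus D$; the Gauss--Bonnet/boundary-term analysis at the finitely many points of $f^{-1}(D)$ only ever sees curves meeting $D$ properly, so curves lying inside $D$ are structurally out of reach of the argument. The same objection applies to your alternative route: weak positivity plus the precise statement of Theorem \ref{thm_restricted_base_locus} only gives $\bB_-(\cA^{\star}) \subseteq D$, and ``ruling out any contribution of $D$'' is exactly the assertion to be proved, not a consequence of local splitting and trivial parabolic structure by themselves (again, $\cO_X(D)$ has $\bB_- \subseteq D$ and is not nef).

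What closes this gap in the paper is a different mechanism, which is precisely designed to control positivity on curves inside the polar set of a singular metric. The metric extends across $D$ as a singular hermitian metric with semi-negative curvature on $\cA$ (Theorem \ref{semi-negativity for harmonic bundles}, via Simpson's norm estimates and the weight filtration of the residue), hence induces a singular semi-positive metric on the tautological bundle $\cO_{\cA^{\star}}(1)$ on $\bP(\cA^{\star})$. Proposition \ref{prop_Lelong_numbers} then computes the Lelong numbers of this metric: because the parabolic structure is trivial, $(\cE^{\diamond})^{\star}$ is canonically $(\underline{\cE}^{\star})^{\diamond}$, the growth of dual sections is sub-polynomial, and all Lelong numbers vanish --- this is the rigorous version of your heuristic that ``with zero weights the growth is at most logarithmic.'' Finally, Demailly's regularization theorem (the last lemma of that section) states that a line bundle carrying a singular semi-positive metric whose Lelong numbers vanish outside a countable set is nef; the regularized forms have arbitrarily small negative part everywhere, which is what yields non-negative degree even on curves contained in $D$. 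If you want to keep your curve-theoretic framing, you must route it through this Lelong-number/regularization step (or some substitute for it); as written, the proposal proves at best that $\bB_-(\cA^{\star}) \subseteq D$, which is the conclusion of Theorem \ref{main result}/\ref{thm_restricted_base_locus}, not of Theorem \ref{trivial_parabolic_filtration}.
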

 
Let $X$, $\cL$ and $D$ as before, and set $j : U := X \backslash D \hookrightarrow X$ the inclusion. Given a $\mu_{\cL}$-polystable filtered regular meromorphic Higgs bundles $(\underline{\cE}, \theta)$ on $(X,D)$ with vanishing parabolic Chern classes, it follows from the works of Simpson \cite{Simpson_open}, Biquard \cite{Biquard} and Mochizuki \cite{Mochizuki_GeoTopo} that there exists an essentially unique tame pluriharmonic metric $h$ on $({\cE}^{\diamond}_{|U}, \theta)$ whose asymptotic behaviour is controlled by the parabolic vector bundle $\underline{\cE}$. The triple $({\cE}^{\diamond}_{|U}, \theta , h)$ defines a tame harmonic bundle on $U$, and ${\cE}^{\diamond}$ can be recovered from this data as the subsheaf of $j_*({\cE}^{\diamond}_{|U})$ whose sections have sub-polynomial growth with respect to $h$. Using this, we will see that Theorem \ref{main result} is a consequence of the following result:

\begin{theorem}\label{semi-negativity for harmonic bundles}
Let $X$ be a complex manifold, and $D \subset X$ be a simple normal crossing divisor. Set $U := X \backslash D$ and $j : U \hookrightarrow X$ the inclusion. \\
Let $(\cE, \theta , h)$ be a tame harmonic bundle on $U$, and $\cA$ be a subsheaf of $\cE$. The hermitian metric $h$ induces a (possibly singular) hermitian metric $h_{\cA}$ on $\cA$.
Denote by $\cA^{\diamond}$ the subsheaf of $j_* \cA$ whose sections have sub-polynomial growth with respect to $h_{\cA}$. Then $\cA^{\diamond}$ is a torsion-free coherent sheaf on $X$. Moreover, if $\cA$ is contained in the kernel of $\theta$, then there exists a unique singular hermitian metric with semi-negative curvature on $\cA^{\diamond}$ whose restriction to $U$ is $h_{\cA}$. 
\end{theorem}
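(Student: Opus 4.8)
The plan is to reduce the whole statement to a plurisubharmonicity property of the functions $\log|s|_h$ attached to local holomorphic sections $s$ of $\cA^{\diamond}$, and then to extend these functions across $D$. Throughout I use that a singular Hermitian metric has semi-negative curvature precisely when $\log|s|$ is plurisubharmonic for every local holomorphic section $s$.

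First I would dispose of coherence and torsion-freeness, which do not use the hypothesis $\cA\subset\ker\theta$. Since $h_{\cA}$ is the restriction of $h$, one has $|s|_{h_{\cA}}=|s|_h$ for sections of $\cA$, so that $\cA^{\diamond}=j_*\cA\cap\cE^{\diamond}$ inside $j_*\cE$, where $\cE^{\diamond}$ is the sub-polynomial-growth prolongation of the ambient harmonic bundle. By the work of Simpson, Biquard and Mochizuki $\cE^{\diamond}$ is a locally free $\OO_X$-module; hence $\cA^{\diamond}$ is a subsheaf of a torsion-free sheaf and is itself torsion-free, any local section dying on the dense open $U$ being zero. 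For coherence I would identify $\cA^{\diamond}$ with the kernel of $\cE^{\diamond}\to j_*(\cE_{|U}/\cA)$ and show it is locally finitely generated; this is the one place where the structure theory is genuinely needed, and if convenient one reduces to the saturated case, where $\cA^{\diamond}$ is recovered from the prolongation of the line subsheaf $\det\cA\hookrightarrow\wedge^{r}\cE$ ($r=\rk\cA$), which is divisorial and hence coherent.

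The analytic heart is the curvature estimate on $U$. Let $s$ be a local holomorphic section of $\cA^{\diamond}$; on $U$ it is a holomorphic section of $\cE$ with $\theta s=0$. Writing $\Theta_h$ for the curvature of the Chern connection of $(\cE,h)$, $\partial_h$ for its $(1,0)$-part and $\theta^{\dagger}$ for the $h$-adjoint of $\theta$, harmonicity is Hitchin's equation $\Theta_h=-[\theta,\theta^{\dagger}]$. Pairing against $s$ and using $\theta s=0$ kills the $\theta^{\dagger}\theta$ contribution, leaving $i\langle\Theta_h s,s\rangle=-i\langle\theta\theta^{\dagger}s,s\rangle\le 0$, i.e. $\Theta_h$ is Griffiths semi-negative along $\ker\theta$. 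Feeding this into the Lelong–Poincaré identity for a holomorphic section,
$$ i\partial\bar\partial\log|s|^2_h=\frac{-i\langle\Theta_h s,s\rangle}{|s|^2_h}+\frac{i\bigl(|s|^2_h\langle\partial_h s,\partial_h s\rangle-\langle\partial_h s,s\rangle\wedge\langle s,\partial_h s\rangle\bigr)}{|s|^4_h}, $$
the second term is non-negative by Cauchy–Schwarz and the first by the previous line, so $\log|s|_h$ is plurisubharmonic on $U$. Crucially this argument never uses that $\cA$ is a subbundle, so it already covers the locus of $U$ where $\cA$ fails to be locally split.

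It remains to extend $\log|s|_h$ across $D$ and to settle uniqueness. Uniqueness is immediate, since a plurisubharmonic function is determined by its restriction to the complement of the pluripolar set $D$, which forces the weights of any admissible metric to be the $\log|s|_h$. For existence I would show that $\log|s|_h$ is bounded above near $D$, and this is the step I expect to be the main obstacle. The defining sub-polynomial growth only yields, near a point of $D$ with local equation $t=0$, a bound $\log|s|_h\le\epsilon\log(1/|t|)+C_\epsilon$ for every $\epsilon>0$, which by itself does not exclude an unbounded (e.g. $\log\log$-type) blow-up. What rules this out is exactly the hypothesis $\cA\subset\ker\theta$: by the norm estimates for tame harmonic bundles the $\log$-power growth of the norm of a single-valued holomorphic section of the weight-$0$ prolongation is governed by the weight filtration of the residues, and the growing, positive-power directions fail to be single-valued across $D$ and so do not belong to $\cA^{\diamond}$, while the sections that do lie in $\cA^{\diamond}$ have norms that stay bounded (indeed typically decay like a negative power of $\log(1/|t|)$; one checks this already for a rank-one degenerating variation of Hodge structure). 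Granting this boundedness, $\log|s|_h$ is plurisubharmonic on $U$ and locally bounded above across the analytic set $D$, hence extends uniquely to a plurisubharmonic function on $X$ by the classical removable-singularity theorem for plurisubharmonic functions. Declaring these extensions to be the weights $\log|s|_{h_{\cA^{\diamond}}}$ defines the desired singular Hermitian metric with semi-negative curvature on $\cA^{\diamond}$ restricting to $h_{\cA}$ on $U$.
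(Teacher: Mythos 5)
Your overall reduction---plurisubharmonicity on $U$ plus extension across $D$, with uniqueness coming from the removable-singularity theorem for plurisubharmonic functions---is exactly the paper's strategy, and two of the three steps are essentially fine: the identification $\cA^{\diamond}=j_*\cA\cap\cE^{\diamond}$ with $\cE^{\diamond}$ locally free gives torsion-freeness and coherence as in the paper, and your curvature computation on $U$ is a correct variant of the paper's lemma (the paper computes $i\partial\overline{\partial}\Vert s\Vert_h^2\geq 0$ and upgrades to plurisubharmonicity of $\log\Vert s\Vert_h$ by Raufi's trick of replacing $s$ by $s\cdot e^{q}$ for polynomials $q$, rather than invoking the quotient-rule formula with the Cauchy--Schwarz term; both are standard).

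The gap is the step you yourself flag and then assume: local boundedness of $\Vert s\Vert_h$ near $D$ for sections $s$ of $\cA^{\diamond}$ killed by $\theta$. The justification you sketch is incorrect. It is not true that ``the growing, positive-power directions fail to be single-valued across $D$'': by Simpson's norm estimates, a holomorphic (single-valued) section $s$ of $\cE^{\diamond}$ on $\Delta$ with $s(0)\in W_k\setminus W_{k-1}$ satisfies $\Vert s(z)\Vert_h\sim (-\log|z|)^{k/2}$, so single-valued sections of the weight-zero prolongation do in general exhibit exactly the unbounded logarithmic blow-up you worried about; single-valuedness excludes nothing here. (Your rank-one ``check'' is also vacuous: in rank one the residue has no nilpotent part, the weight filtration is trivial, and no logarithmic growth can occur.) What actually rules out the growth---and this is the heart of the paper's proof---is the chain: $\theta(s)=0$ forces $s(0)\in\Ker(\Res(\theta))$; the kernel of the residue lies in the generalized eigenspace for the eigenvalue $0$, on which the residue is nilpotent; by Schmid's lemma $\Ker(N)\subset W_0$ for the weight filtration of a nilpotent endomorphism $N$; and then Simpson's estimate with $k=0$ gives $\Vert s\Vert_h\leq C$. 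One also needs the reduction to the one-variable case, which the paper handles by slicing and the maximum principle over the distinguished boundary of the polydisk. Without the residue/weight-filtration argument your proof does not close; with it, it becomes the paper's proof.
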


For the notion of singular hermitian metric on a torsion-free coherent sheaf, the reader is referred to section \ref{Singular hermitian metrics on torsion-free sheaves}.
Theorem \ref{trivial_parabolic_filtration} follows then from the computation of the Lelong numbers of the induced singular hermitian metric on the dual sheaf $\cA^{\star}$, see Proposition \ref{prop_Lelong_numbers}. \\

An important example of tame harmonic bundle is given by the following construction. Let $X$ be a complex manifold, and $D \subset X$ be a simple normal crossing divisor. Let $(\cV, \nabla ,  \cF^{\sbt}, h)$ be a complex polarized variation of Hodge structures ($\bC$-PVHS) on $U := X \backslash D$, and denote by $(\cE, \theta)$ the associated Higgs bundle on $U$, i.e. $ \cE := \Gr_{\cF} \cV $ and $ \theta := \Gr_{\cF} \nabla $. If $h_{\cE}$ denotes the positive-definite hermitian metric on $\cE$ induced by $h$, then the triple $(\cE, \theta , h_{\cE})$ defines a tame harmonic bundle on $U$, cf. section \ref{PVHS}. As a direct consequence of Theorem \ref{semi-negativity for harmonic bundles}, we get the

\begin{theorem} \label{thm_PVHS}
Let $X$ be a complex manifold, and $D \subset X$ be a simple normal crossing divisor. Let $(\cV, \nabla ,  \cF^{\sbt}, h)$ be a log $\bC$-PVHS on $(X,D)$, and denote by $(\cE, \theta)$ the corresponding log-Higgs bundle. Assume that the eigenvalues of the residues of $(\cV, \nabla)$ along the irreducible components of $D$, which are known to be real numbers, are non-negative.\\
If $\cA$ is a subsheaf of $\cE$ contained in the kernel of the Higgs field $\theta$, then the (possibly singular) hermitian metric $h_{\cA}$ on $\cA_{|U}$ induced by $h_{\cE}$ extends uniquely as a singular hermitian metric on $\cA$ with semi-negative curvature. 
\end{theorem}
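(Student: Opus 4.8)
The plan is to deduce Theorem~\ref{thm_PVHS} directly from Theorem~\ref{semi-negativity for harmonic bundles} by exhibiting the data of a $\bC$-PVHS as a tame harmonic bundle whose sub-polynomial-growth subsheaf recovers the logarithmic extension. First I would recall the construction described just before the statement: given a log $\bC$-PVHS $(\cV, \nabla, \cF^{\sbt}, h)$ on $(X,D)$, the associated Higgs bundle on $U := X \backslash D$ is $\cE := \Gr_{\cF} \cV$ with Higgs field $\theta := \Gr_{\cF} \nabla$, and $h$ induces a positive-definite hermitian metric $h_{\cE}$ on $\cE$. The key input, to be justified in section~\ref{PVHS}, is that $(\cE, \theta, h_{\cE})$ is a \emph{tame} harmonic bundle on $U$; this is where the hypothesis enters, since the tameness and the precise parabolic weights of the associated filtered object are governed by the eigenvalues of the residues of $\nabla$ along the components of $D$.

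The main point is then to identify the coherent extension produced by Theorem~\ref{semi-negativity for harmonic bundles} with the given logarithmic sheaf. Applying that theorem to the subsheaf $\cA \subset \cE$ (which lies in $\Ker \theta$ by hypothesis) yields a torsion-free coherent sheaf $\cA^{\diamond} \subset j_* \cA$, consisting of sections of sub-polynomial growth with respect to $h_{\cA}$, together with a singular hermitian metric of semi-negative curvature extending $h_{\cA}$. So the crux is to show that, under the non-negativity assumption on the residue eigenvalues, this sub-polynomial-growth extension $\cA^{\diamond}$ coincides with $\cA$ itself, viewed as a subsheaf of the logarithmic extension $\cE$ on all of $X$. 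I would argue this by comparing norm growth: the metric $h_{\cE}$ on a $\bC$-PVHS has well-understood polynomial asymptotics controlled by the nilpotent orbit / $SL_2$-orbit theorem, so that a local frame of $\cE$ adapted to the Hodge and weight filtrations has norms growing like powers of $\log|z_i|$ times powers of $|z_i|$, with the exponents of $|z_i|$ determined precisely by the residue eigenvalues. The non-negativity of these eigenvalues forces every local section of $\cE$ to have sub-polynomial growth, and conversely a section of $j_* \cE$ with sub-polynomial growth cannot acquire poles, so that $\cE^{\diamond} = \cE$; restricting this comparison to the subsheaf $\cA$ gives $\cA^{\diamond} = \cA$.

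Once this identification is in place, the conclusion is immediate: Theorem~\ref{semi-negativity for harmonic bundles} furnishes on $\cA^{\diamond} = \cA$ a singular hermitian metric with semi-negative curvature whose restriction to $U$ is $h_{\cA}$, and uniqueness of such an extension is inherited verbatim from that theorem. The hard part will be the growth comparison establishing $\cA^{\diamond} = \cA$, i.e. translating the hypothesis on residue eigenvalues into the statement that the logarithmic extension is exactly the sub-polynomial-growth extension of the harmonic bundle. This is the standard dictionary relating Deligne's canonical extensions of a PVHS to the parabolic/filtered structure of the corresponding tame harmonic bundle, and I would either invoke it from the literature (Schmid's asymptotics together with the norm estimates of Simpson, Mochizuki) or verify it directly on a polydisk neighbourhood of a point of $D$ using an adapted frame; the non-negativity hypothesis is precisely what pins down which canonical extension appears.
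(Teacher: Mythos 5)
Your overall strategy is the paper's: realize $(\cE,\theta,h_{\cE})$ as a tame harmonic bundle on $U$ (note that tameness is automatic because the Higgs field of a $\bC$-PVHS is nilpotent --- it does not use the residue hypothesis, contrary to what you suggest), apply Theorem \ref{semi-negativity for harmonic bundles}, and compare the resulting sub-polynomial-growth extension with the given logarithmic one. But the comparison you make is wrong: the equalities $\cE^{\diamond} = \cE$ and $\cA^{\diamond} = \cA$ are false in general, and your justification (``a section of $j_*\cE$ with sub-polynomial growth cannot acquire poles'') conflates pole order along $D$ with metric growth --- the discrepancy between the two is exactly what the parabolic structure records. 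Already for the rank-one log $\bC$-PVHS $\cV = \cO_{\Delta}\cdot e$ on $(\Delta,\{0\})$ with $\nabla e = e \otimes dz/z$ (residue $1 \geq 0$, trivial Hodge structure, $\theta = 0$), the flat section $e' = e/z$ has constant Hodge norm, so $e' \in \cE^{\diamond}$ although $e' \notin \cE$; thus $\cE \subsetneq \cE^{\diamond}$ whenever some residue eigenvalue is $\geq 1$. Worse, $\cA$ is an \emph{arbitrary} subsheaf of $\Ker \theta$, so equality fails even with all eigenvalues in $[0,1)$: for the trivial PVHS on $\Delta$ and $\cA = z\cO_{\Delta} \subset \cO_{\Delta} = \cE$ one has $\cA^{\diamond} = \cO_{\Delta} \neq \cA$. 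So the sentence ``Theorem \ref{semi-negativity for harmonic bundles} furnishes on $\cA^{\diamond} = \cA$ a singular hermitian metric'' rests on a false identification, and the proof as written does not close.

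The gap is repairable, and the repair is what the paper actually does: one only needs the \emph{inclusion} $\cA \subset \cA^{\diamond}$, equivalently that sections of the logarithmic extension $\cE$ have sub-polynomial Hodge norm; this is where the non-negativity of the residue eigenvalues of $\nabla$ enters. (The paper obtains it structurally rather than through nilpotent-orbit asymptotics: by Lemma \ref{DM=h} the Deligne--Manin and growth filtrations of the flat bundle coincide since $\theta$ is nilpotent, the hypothesis gives $\cV \subset \mathscr{E}^{\diamond}$, and the injective maps $g_{\alpha} : \Gr_{\cF}\mathscr{E}^{\alpha} \to \cE^{\alpha}$ then yield $\cA \subset \cE \hookrightarrow \cE^{\diamond}$.) Given this inclusion, which is an isomorphism over $U$, the semi-negatively curved metric that Theorem \ref{semi-negativity for harmonic bundles} puts on $\cA^{\diamond}$ restricts to one on $\cA$: local sections of $\cA$ are local sections of $\cA^{\diamond}$, so their log-norms are plurisubharmonic (the restricted metric may degenerate on a measure-zero set, which the definition of a singular hermitian metric allows), and uniqueness of the extension follows from Lemma \ref{extension of metric with semi-negative curvature}. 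If you replace your claimed identification by this one-way inclusion plus restriction, your argument becomes correct and essentially coincides with the paper's.
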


\begin{cor} \label{cor_PVHS}
In the situation of Theorem \ref{thm_PVHS}, if moreover $X$ is projective, then the dual $\cA^\star$ of $\cA$ is weakly positive.
\end{cor}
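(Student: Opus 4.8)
The plan is to deduce the corollary from Theorem~\ref{thm_PVHS} by the very same mechanism that reduces Theorem~\ref{main result} to Theorem~\ref{semi-negativity for harmonic bundles}: the analytic content is already in place, and all that the extra hypothesis ``$X$ projective'' buys us is the passage from a negatively curved singular metric to weak positivity of the dual. Concretely, Theorem~\ref{thm_PVHS} equips $\cA$ with a singular hermitian metric $h_{\cA}$ of semi-negative curvature extending the metric induced by $h_{\cE}$ on $U$ (here the non-negativity of the residues is exactly what guarantees the sub-polynomial growth making this extension possible). So the only thing left to establish is the general principle: \emph{a torsion-free coherent sheaf on a projective manifold carrying a singular hermitian metric of semi-negative curvature has weakly positive dual.}

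First I would dualize, endowing $\cA^{\star}$ with the dual singular hermitian metric $h_{\cA}^{\star}$. On the Zariski-open locus where $\cA$ is locally free and the metric is smooth, the classical equivalence between Griffiths semi-negativity of a bundle and Griffiths semi-positivity of its dual shows that $h_{\cA}^{\star}$ is semi-positively curved there. The task is to promote this to the singular, non-locally-free setting, using the definition of semi-positive curvature from section~\ref{Singular hermitian metrics on torsion-free sheaves}, formulated through plurisubharmonicity of $\log |s|_{h_{\cA}^{\star}}$ for local holomorphic sections $s$ of $\cA^{\star}$, rather than through a curvature tensor. Once $(\cA^{\star}, h_{\cA}^{\star})$ is seen to have semi-positive curvature across $X$, I would invoke the statement recalled in section~\ref{Weakly positive torsion-free sheaves} that a torsion-free coherent sheaf on a projective manifold with a semi-positively curved singular hermitian metric is weakly positive, which yields the conclusion; the refined control on the locus of failure can be read off from Theorem~\ref{thm_restricted_base_locus}.

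The main obstacle is precisely this duality step for singular metrics. As Raufi observed, the curvature current of the dual of a negatively curved singular metric need not be well-defined as a current, so one cannot transport the smooth equivalence naively. The resolution is to avoid the curvature tensor altogether and argue with the plurisubharmonic characterization of positivity, checking directly that $\log |s|_{h_{\cA}^{\star}}$ is plurisubharmonic (equivalently, that local sections of $\cA^{\star}$ have the requisite norm behavior) and that the singular metric extends with semi-positive curvature across both $D$ and the analytic locus where $\cA$ fails to be a subbundle of $\cE$. This is the technical heart; granting it, the corollary is a formal consequence of Theorem~\ref{thm_PVHS} in the projective case.
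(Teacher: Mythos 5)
Your overall architecture is the paper's: Theorem \ref{thm_PVHS} supplies the singular hermitian metric $h_{\cA}$ with semi-negative curvature on $\cA$, one passes to the dual metric $h_{\cA}^{\star}$ on $\cA^{\star}$, and one applies Theorem \ref{Paun-Takayama} at the points of a dense open set (e.g.\ points of $U$ where $\cA$ is a subbundle of $\cE$, so that $\cA^{\star}$ is locally free and $h_{\cA}^{\star}$ is finite there) to conclude weak positivity. This is exactly how the paper deduces weak positivity in the proof of Theorem \ref{main result}, and the corollary is obtained the same way.

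However, the step you isolate as ``the technical heart'' rests on a misreading of the definitions in section \ref{Singular hermitian metrics on torsion-free sheaves}, and the verification you propose would actually fail. In that framework, plurisubharmonicity of $\log \Vert s \Vert_{h_{\cA}^{\star}}$ for local sections $s$ of $\cA^{\star}$ is the definition of \emph{semi-negative} curvature of $h_{\cA}^{\star}$ (equivalently, of semi-positive curvature of $h_{\cA}$ on $\cA$); it is \emph{not} the definition of semi-positive curvature of $h_{\cA}^{\star}$, and it is generically false for the dual of a negatively curved metric --- already for a line bundle, if both a metric and its dual were negatively curved, the local weights would be pluriharmonic and the curvature zero. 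What ``$h_{\cA}^{\star}$ has semi-positive curvature'' actually means, per the paper's definition, is that $(h_{\cA}^{\star})^{\star}$ on $\cA^{\star\star}$ has semi-negative curvature; since $(h_{\cA}^{\star})^{\star}$ agrees with $h_{\cA}$ wherever $h_{\cA}$ is a genuine inner product, and $\cA^{\star\star}$ agrees with $\cA$ on the locus where $\cA$ is locally free, this condition is precisely the semi-negativity furnished by Theorem \ref{thm_PVHS}. In other words, the duality step is definitional --- there is no Raufi-type obstacle to circumvent, because the formalism of section \ref{Singular hermitian metrics on torsion-free sheaves} never forms a curvature current --- and your manufactured ``main obstacle'' dissolves once the definition is applied with the correct orientation. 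With that correction, your proof coincides with the paper's and is complete.
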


\begin{rems} \label{rem_Fujita_Kawamata}
\begin{enumerate}
\item In the situation of Corollary \ref{cor_PVHS}, if one assumes moreover that the residues of $(\cV, \nabla)$ are nilpotent and that $\cA$ is a locally split subsheaf of $\cE$ contained in the kernel of the Higgs field $\theta$, then one can apply Theorem \ref{trivial_parabolic_filtration} to get that its dual $\cA^\star$ is nef. This special case is already known, cf. \cite{Zuo_neg} and \cite[Theorem 0.6]{Brunebarbe_Crelle}. See also \cite{Fujino-Fujisawa2017} for a different proof. From this, one can easily deduce Corollary \ref{cor_PVHS} in the special case where the eigenvalues of the residues are rational numbers, as explained for example in \cite{Popa_Wu}.

\item With the notations of Theorem \ref{thm_PVHS}, let $p$ be the biggest integer such that $\cF^p \cV = \cV$. It follows from Griffiths’ transversality that $Gr^p_{\cF} \cV = \cV \slash \cF^{p+1} \cV$ is a subbundle of $\cE$ contained
in the kernel of the Higgs field, so that its dual is weakly positive (note that we make no assumption on the monodromy at infinity). When the monodromy at infinity is unipotent, $(Gr^p_{\cF} \cV )^{\star}$ is canonically isomorphic to the lowest piece of the Hodge filtration of the dual log $\bC$-PVHS. Therefore, Theorem \ref{trivial_parabolic_filtration} implies that the lowest piece of the Hodge filtration of a log $\bC$-PVHS with unipotent monodromy at infinity is nef, as previously shown for \textit{real} polarized variation of Hodge structures by Fujita \cite{Fujita78} and Zucker \cite{Zucker82} for curves, and by Kawamata \cite{Kawamata81}, Fujino-Fujisawa \cite{Fujino-Fujisawa} and Fujino-Fujisawa-Saito \cite{Fujino-Fujisawa-Saito} in higher dimensions. An algebraic proof of this last result appears in \cite[Corollary 1.4]{Arapura16}.
\end{enumerate}
\end{rems}

Theorem \ref{main result} has some consequences for the geometry of smooth quasi-projective complex varieties admiting a complex local system with infinite monodromy that we now discuss. In analogy with \cite{Kollar93, Kollar_book_95} we say that a complex local system $L$ on a smooth quasi-projective complex variety $X$ is generically large when there exists countably many closed subvarieties $D_i \subsetneq X$ such that for every smooth quasi-projective complex variety $Z$ equipped with a proper map $f : Z \arrow X$ satisfying $f(Z) \not \subset \cup D_i$ the pullback local system $f^\ast L$ is non-trivial. This condition is less exceptional that one might first thinks, since conjecturally at least any semisimple complex local system is virtually the pull-back of a generically large local system.

\begin{theorem}\label{generically large local system}
Let $X$ be a smooth projective complex variety and $D \subset X$ be a simple normal crossing divisor. If there exists a generically large semisimple complex local system with discrete monodromy on $X \backslash D$, then the logarithmic cotangent bundle $\Omega^1_X(\log D)$ of $(X,D)$ is weakly positive.
\end{theorem}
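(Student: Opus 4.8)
The plan is to deduce the statement from Theorem \ref{main result}, applied not to the Higgs bundle of the local system itself but to its \emph{endomorphism} Higgs bundle, after translating the hypotheses of generic largeness and discreteness into a non-degeneracy statement for the Higgs field. First I would invoke the tame non-abelian Hodge correspondence \cite{Simpson_open, Biquard, Mochizuki_GeoTopo}: the semisimple local system $L$ on $U = X \backslash D$ underlies a tame harmonic bundle, which extends to a $\mu_{\cL}$-polystable filtered regular meromorphic Higgs bundle $(\underline{\cE}, \theta)$ on $(X,D)$ with vanishing parabolic Chern classes. Writing $T_X(-\log D) := (\Omega^1_X(\log D))^{\star}$ and contracting the (logarithmic) Higgs field with log tangent vectors, I read $\theta$ as a morphism of sheaves
\[ \iota\theta : T_X(-\log D) \longrightarrow (\underline{\End \cE})^{\diamond}, \qquad v \longmapsto \theta(v), \]
landing in the weight-zero bundle of the endomorphism parabolic Higgs bundle $(\underline{\End \cE}, \ad\theta) \cong \underline{\cE} \otimes \underline{\cE}^{\star}$. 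Since $\theta \wedge \theta = 0$, the endomorphisms $\theta(v)$ pairwise commute, so $[\theta, \theta(v)] = 0$; equivalently, the image of $\iota\theta$ is contained in $\ker(\ad\theta)$. Let $\cA$ be the saturation of $\IIm(\iota\theta)$ in $(\underline{\End \cE})^{\diamond}$.

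Now $\End(L) = L \otimes L^{\star}$ is again semisimple, so $(\underline{\End \cE}, \ad\theta)$ is itself $\mu_{\cL}$-polystable with vanishing parabolic Chern classes, and $\cA$ is a subsheaf of its weight-zero bundle contained in the kernel of $\ad\theta$. Theorem \ref{main result} then yields that $\cA^{\star}$ is weakly positive. To finish, I would dualize the defining morphism: assuming $\iota\theta$ is generically injective, the map $T_X(-\log D) \to \cA$ is generically injective between torsion-free sheaves of the same rank, so the dual $\cA^{\star} \to \Omega^1_X(\log D)$ is generically surjective. Its image is then a full-rank quotient of $\cA^{\star}$, hence weakly positive, and it agrees with the locally free sheaf $\Omega^1_X(\log D)$ over a dense open set; since weak positivity passes to torsion-free quotients and depends only on the generic isomorphism class, $\Omega^1_X(\log D)$ is weakly positive, as claimed.

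The crux, and the step I expect to be the main obstacle, is the generic injectivity of $\iota\theta$, which is exactly where generic largeness and discreteness of the monodromy enter. I would argue by contradiction: if $\ker(\iota\theta)$ were nonzero on a dense open $U' \subseteq U$, it would define a positive-dimensional holomorphic distribution along which $\theta$ vanishes, and one checks (as for the kernel of the Kodaira--Spencer map of a variation of Hodge structures) that this distribution is integrable. Through a general point of $U'$ there would then be an integral subvariety whose closure $W$ is not contained in the countably many $D_i$ appearing in the definition of generic largeness (a general leaf meets a general point, and a countable union of proper subvarieties cannot cover a dense open set). Resolving $W$ gives a proper $f : Z \to X$ with $f(Z) = W \not\subset \cup D_i$ along which the pulled-back Higgs field vanishes; hence $f^{\ast}L$ corresponds to a Higgs bundle with trivial Higgs field, so it is unitary, and being of discrete monodromy it is therefore of finite monodromy. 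Trivializing it on a finite étale cover $Z' \to Z$ produces a proper map $Z' \to X$ with the same image $W \not\subset \cup D_i$ along which $L$ pulls back to the trivial local system, contradicting generic largeness. The delicate analytic points requiring genuine care are the integrability of $\ker(\iota\theta)$ for a general tame harmonic bundle and the algebraicity of its leaves (so that one truly obtains a \emph{proper} $Z \to X$); the implications ``unitary $+$ discrete $\Rightarrow$ finite'' and ``finite $\Rightarrow$ trivial after a finite cover'' are then routine.
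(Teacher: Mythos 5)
Your first half --- reducing the theorem to generic injectivity of the contraction map $\iota\theta : T_X(-\log D) \to (\End\cE)^{\diamond}$ by applying Theorem \ref{main result} to the endomorphism Higgs bundle $(\End(\underline{\cE}), \ad\theta)$, noting that $\theta\wedge\theta=0$ puts the image inside $\ker(\ad\theta)$, and then dualizing (a generically surjective map from a weakly positive sheaf onto a torsion-free sheaf has weakly positive target) --- is exactly the paper's argument, and it is correct; passing to the saturation of the image is harmless, since the induced map from the saturation to $\Omega^1_X(\log D)\otimes(\End\cE)^{\diamond}$ factors through a torsion sheaf and hence vanishes.

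The genuine gap is in your proof of generic injectivity, which is the crux. Your foliation argument needs two facts you do not establish, and neither is available. First, integrability of $\ker(\iota\theta)$ is not automatic: writing $\theta=\sum_i A_i\,dz_i$ locally, the condition $\theta\wedge\theta=0$ gives $[A_i,A_j]=0$, but the Frobenius bracket of two kernel fields $u,v$ pairs with $\theta$ as $\sum_{i,j}u_iv_j(\partial_jA_i-\partial_iA_j)$, which has no reason to vanish; the analogy with the Kodaira--Spencer kernel uses the existence of a period map and does not apply to a general tame harmonic bundle. Second, and more fatally, even granting integrability the leaves are only \emph{analytic} objects, while generic largeness quantifies over \emph{proper} maps $f:Z\to X$ from quasi-projective varieties. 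A leaf through a general point need not be algebraic, and its Zariski closure $W$ can be all of $X$ (leaves of holomorphic foliations are typically Zariski-dense, e.g.\ linear foliations on abelian varieties); in that case the pulled-back Higgs field does \emph{not} vanish on $T_W$, and no contradiction with generic largeness arises. Algebraicity of leaves is precisely the kind of statement that requires strong positivity of the foliation (Bogomolov--McQuillan/Campana--P\u{a}un type criteria), so invoking it here would be circular with the positivity you are trying to prove. The paper avoids both issues by never integrating anything: for a general point $x$ and a prescribed tangent direction $s$, it takes a complete intersection \emph{curve} $C\subset U$ through $x$ with $T_C(x)=\bC\cdot s$, pulls the harmonic bundle back to the normalization of $C$, and uses the lemma that for a pure imaginary tame harmonic bundle the monodromy is finite if and only if it is discrete and the Higgs field vanishes; since generic largeness forces $f^*L$ to have infinite (and discrete) monodromy, the Higgs field on the curve is nonzero, and a local-freeness-of-the-cokernel argument at the general point $x$ upgrades this to $\phi(s)\neq 0$. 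Replacing your leaf construction by this restriction-to-curves argument is what is needed to close the gap.
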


Under the same assumptions, if the local system underlies a complex polarized variation of Hodge structures, then it is known that $(X,D)$ is of log-general type, cf. \cite{Zuo_neg, Brunebarbe_Crelle, Brunebarbe-Cadorel}. More generally, we conjecture that $(X,D)$ is of log-general type if there exists a generically large semisimple complex local system on $X \backslash D$ whose monodromy representation is Zariski-dense in a semisimple complex algebraic group, as already known when $D = \emptyset$ \cite{Mok92, Zuo96, CCE15}.

\section{A review of the non-abelian Hodge correspondence for quasi-projective varieties}\label{non-abelian Hodge correspondence}

In this section, we give a brief review of the non-abelian Hodge correspondence for smooth quasi-projective complex varieties, which is one of the key input of our approach. References include \cite{Simpson_open, Biquard, Mochizuki_asterisque309, Mochizuki_GeoTopo, Mochizuki_Wild-harmonic-bundles, Sabbah05}.

\subsection{Parabolic bundles} We follow \cite{Mochizuki_asterisque309} and \cite{Iyer_Simpson}, however be aware that our terminology differs slightly from theirs.\\

Let $X$ be a complex manifold and $D = \cup_{i \in I} D_i \subset X$ be a simple normal crossing divisor. A \textit{parabolic sheaf} $\underline{\cE}$ on $(X,D)$ is a meromorphic bundle $\cE$ on $(X,D)$, i.e. a torsion-free coherent $\cO_X\left[*D\right]$-module, endowed with a collection of torsion-free coherent $\cO_X$-submodules ${\cE}^{\alpha}$ indexed by multi-indices $\alpha = (\alpha_i)_{i \in I}$ with $\alpha_i \in \bR$, satisfying the following conditions:

\begin{itemize}
\item (the filtration is exhaustive and decreasing) $\cE = \cup_{\alpha} \cE^{\alpha}$ and $\cE^{\alpha} \hookrightarrow \cE^{\beta}$ whenever $\alpha \geq \beta$ (i.e. $\alpha_i \geq \beta_i$ for all $i$),

\item (normalization/support) $\cE^{\alpha + \delta^i} = \cE^{\alpha}(- D_i)$, where $\delta^i$ denotes the multi-index $\delta^i_i =1$ and $\delta^i_j =0$ for $j \neq i$,
\item (semicontinuity) for any given $\alpha$ there exists $c > 0$ such that for any multi-index $\epsilon$ with $0 \leq \epsilon_i < c$ we have $\cE^{\alpha - \epsilon} = \cE^{\alpha}$.
\end{itemize}

We denote by $\cE^{\diamond}$ the subsheaf of $\cE$ with parabolic weight $(0, \cdots, 0) \in \bR^I$.\\

Fixing $c \in \bR$, it follows from the axioms that a parabolic sheaf is determined by the $\cE^{\alpha}$ for the collection of jumping indices $\alpha$ with $c \leq \alpha_i < c +1$ for any $c \in \bR$, and this collection is finite when $D$ has a finite number of irreducible components.\\ 

We say that $\underline{\cE}$ is \textit{locally abelian} if in a Zariski neighborhood of any point $x \in X$, $\underline{\cE}$ is isomorphic to a direct sum of parabolic line bundles (i.e. parabolic sheaves which are locally-free of rank $1$). A \textit{parabolic bundle} on $(X,D)$ is a parabolic sheaf wich is locally abelian. In particular, all the $\cE^{\alpha}$ are locally-free. The parabolic structure is called trivial when all coefficients of the jumping indices are integers. \\

There is a notion of parabolic Chern classes for parabolic bundles (we refer the reader to \cite[chapter 3]{Mochizuki_asterisque309} and \cite{Iyer_Simpson} for the quite complicated formulas). When $\underline{\cE}$ is a parabolic bundle with trivial parabolic structure, then its parabolic Chern classes coincide with those of $\cE^{\diamond}$.

\subsection{Prolongation according to growth conditions}
Let $X$ be an $n$-dimensional complex manifold, and let $D  = \cup_{ i \in I}  D_i$ be a simple normal crossing divisor.

\begin{defin} Let $P$ be a point of $X$, and let $D_1, \cdots, D_l$ be the irreducible components of $D$ containing $P$. An admissible coordinate system around $P$
is a pair $(U, \psi)$, where:
\begin{itemize}
\item $U$ is an open subset of $X$ containing $P$,
\item $\psi$ is a holomorphic isomorphism $U \simeq \Delta^n = \left\{ (z_1, \cdots , z_n) \in \bC^n ~\middle|~ |z_i| < 1 \right\}$ such that $\psi(P) = (0, \cdots , 0)$ and $\psi(D_i) = \left\{z_i = 0 \right\}$ for any $i = 1, \cdots , l$.
\end{itemize}
\end{defin}

Let $\cE$ be a holomorphic vector bundle on $X \backslash D$ equipped with a hermitian metric $h$.
Let $\alpha = (\alpha_i)_{ i \in I} \in \bR^I$ be a tuple of real numbers.

\begin{defin} Let $U$ be an open subset of $X$, and $s$ be an element
of $\HH^0(U \backslash D, \cE)$. We say that the order of growth of $s$ is bigger than $\alpha$ if for any point $P \in U$, any admissible coordinate system $(U, \psi)$ around $P$ and any positive real number $\epsilon >0$, there exists a
positive constant $C$ such that the following inequality holds on $U \backslash D$:
\[ \Vert s(z) \Vert_h \leq  C \cdot  \prod_{i=1}^l |z_i|^{\alpha_i - \epsilon} \]
\end{defin}

We denote by $\cE^{\alpha}$ the subsheaf of $j_* \cE$ of sections with order of growth bigger than $\alpha$. The $\cE^{ \alpha}$, $\alpha \in \bR^I$, form a decreasing filtration of the subsheaf of $j_* \cE$. 

\begin{defin}[Moderate hermitian metric on a holomorphic vector bundle]
Let $X$ be a complex manifold, $D \subset X$ be a simple normal crossing divisor, and $\cE$ be a holomorphic bundle on $X \backslash D$. Set $j: X \backslash D \hookrightarrow X$ the inclusion. A hermitian metric $h$ on $\cE$ is called moderate if the subsheaf $\cup_{\alpha \in \bR^I} \cE^{ \alpha}$ of $j_* \cE$ consisting of sections with moderate growth near $D$ is a meromorphic sheaf (i.e. a torsion-free coherent $\cO_X\left[*D\right]$-module), and for every $\alpha \in \bR^I$ the subsheaf $\cE^{\alpha}$ of $j_* \cE$ is a coherent $\cO_X$-module.
\end{defin}

When the metric is moderate, one verifies immediately that the collection of torsion-free coherent sheaves $\cE^{\alpha}$ form a parabolic sheaf on the pair $(X,D)$.

\subsection{$\lambda$-Connection bundles}

\begin{defin}[$\lambda$-Connection]
Let $\lambda \in \bC$.
Let $X$ be a complex manifold and $\cE$ be a locally-free $\cO_X$-module of finite rank. A $\lambda$-connection on $\cE$ is a $\bC$-linear map $D^{\lambda} : \cE \arrow \cE \otimes_{\cO_X} \Omega_X^1 $ which satisfies the twisted Leibniz rule:
$$ D^{\lambda} (f \cdot s) = f \cdot D^{\lambda} (s) + \lambda \cdot df \wedge s, $$ 
where $f$ and $s$ are sections of $\cO_X$ and $\cE$ respectively. 
\end{defin}

One verifies immediately that a $\lambda$-connection $D^{\lambda}$ induces for every integer $p \geq 0$ a $\bC$-linear map $D^{\lambda} : \cE  \otimes_{\cO_X} \Omega_X^p \arrow \cE \otimes_{\cO_X} \Omega_X^{p+1} $. A $\lambda$-connection $D^{\lambda}$ is called \textit{flat} or \textit{integrable} if $D^{\lambda} \circ D^{\lambda} =0$. 

\begin{defin}[Higgs bundle]
A pair $(\cE, \theta)$ consisting of a locally-free $\cO_X$-module of finite rank $\cE$ equipped with a flat $\lambda$-connection $\theta$ is called a Higgs bundle when $\lambda = 1$.
\end{defin}

\begin{defin}[Logarithmic $\lambda$-connection]
Let $X$ be a complex manifold and $D \subset X$ be a normal crossing divisor. A logarithmic $\lambda$-connection bundle on $(X,D)$ is a pair $(\cE, D^{\lambda})$ consisting of a locally-free $\cO_X$-module of finite rank $\cE$ together with a $\bC$-linear map $D^{\lambda} : \cE \arrow \cE \otimes_{\cO_X} \Omega_X^1( \log D) $ which satisfies the twisted Leibniz rule.
\end{defin}

\begin{defin}[Meromorphic $\lambda$-connection]
Let $X$ be a complex manifold and $D \subset X$ be a normal crossing divisor. A filtered regular meromorphic $\lambda$-connection bundle on $(X,D)$ is a pair $(\underline{\cE}, D^{\lambda})$ consisting of a parabolic bundle $\underline{\cE}$ together with a $\bC$-linear map $D^{\lambda} : \cE \arrow \cE \otimes_{\cO_X\left[*D\right]} \Omega_X^1\left[*D\right] $ which satisfies the twisted Leibniz rule, and such that for any parabolic weight $\alpha$ the $\lambda$-connection $D^{\lambda}$ induces a structure of logarithmic $\lambda$-connection bundle on $(\cE^{\alpha},  D^{\lambda})$.
\end{defin}

The notion of flatness generalizes to the logarithmic and the meromorphic settings.
In particular, by specializing to $\lambda = 0$ we get the notions of logarithmic Higgs bundles and filtered regular meromorphic Higgs bundles on $(X,D)$.

\begin{defin}[Residues of a logarithmic $\lambda$-connection]
Let $X$ be a complex manifold, $D \subset X$ be a normal crossing divisor and $(\cE, D^{\lambda})$ be a logarithmic $\lambda$-connection bundle on $(X,D)$. For any irreducible component $D_k$ of $D$, there is an associated Poincar\'e residue $\cO_X$-linear map $R_k : \Omega^1_X(\log D) \arrow \cO_{D_k}$. The map $(R_k \otimes \text{id}) \circ D^{\lambda}$ induces an $\cO_{D_k}$-linear endomorphism $\text{res}_{D_k}(D^{\lambda}) \in \End(\cE_{|D_k})$ which is called the residue of $D^{\lambda}$ along $D_k$. 
\end{defin}

If $X$ is a projective manifold equipped with an ample line bundle $\cL$, there is a corresponding notion of $\mu_{\cL}$-stability for logarithmic $\lambda$-connection bundles and filtered regular meromorphic $\lambda$-connection bundles on $(X,D)$, cf. \cite[section 3.1.3]{Mochizuki_asterisque309}. When $(\underline{\cE}, D^{\lambda})$ is a filtered regular meromorphic $\lambda$-connection bundle on $(X,D)$ such that the parabolic structure on $\underline{\cE}$ is trivial, then its $\mu_{\cL}$-stability is equivalent to the $\mu_{\cL}$-stability of the logarithmic $\lambda$-connection bundle $(\cE^{\diamond}, D^{\lambda})$.

\subsection{Pluriharmonic metrics}

Let $(\cE, \theta)$ be a Higgs bundle on a complex manifold $X$, and $h_{\cE}$ be a hermitian metric on $\cE$. Let $E := \cC^{\infty}_X \otimes_{\cO_X} \cE$ be the smooth vector bundle underlying $\cE$, and $ \overline{\partial}_{\cE}$ be the operator defining the holomorphic structure on $\cE$. Let $\nabla^u = \partial_{\cE} + \overline{\partial}_{\cE}$ be the Chern connection associated to the hermitian metric $h_{\cE}$ on the holomorphic bundle $\cE$, and $\theta^{\star}$ be the adjoint of $\theta$ with respect to $h_{\cE}$. By definition, the metric $h_{\cE}$ is said \textit{pluriharmonic} if the operator $\nabla := \partial_{\cE} + \overline{\partial}_{\cE} + \theta + \theta^{\star}$ is integrable, i.e. if the differentiable form $\nabla^2 \in A^2(\End(E))$ is zero. In that case, the holomorphic bundle $\mathscr{E} := (E,  \overline{\partial}_E + \theta^{\star} )$ equipped with the connection $\nabla$ defines a flat bundle.\\

Conversely, let $(\mathscr{E}, \nabla)$ be a flat bundle on a complex manifold $X$, and denote by $E$ the smooth vector bundle underlying $\mathscr{E}$. The choice of a hermitian metric $h_{\mathscr{E}}$ on $\mathscr{E}$ induces a canonical decomposition $\nabla = \nabla^u + \Psi$, where $\nabla^u$ is a unitary connection on $\mathscr{E}$ with respect to $h_{\mathscr{E}}$ and $\Psi$ is autoadjoint for $h_{\mathscr{E}}$. Both decompose in turn in their components of type $(1, 0)$ and $(0, 1)$:
\[ \nabla^u = \partial_{\cE} + \overline{\partial}_{\cE}, \hspace{ 1 cm} \Psi = \theta + \theta^{\star}.\]

By definition, the metric $h_{\mathscr{E}}$ is said \textit{pluriharmonic} if the operator $D^{\prime \prime}: =  \overline{\partial}_{\cE} + \theta$ is integrable, i.e. if the differentiable form $(D^{\prime \prime})^2 \in A^2(\End(E))$ is zero. In that case, the holomorphic bundle $\cE := (E,  \overline{\partial}_E)$ endowed with the one-form $\theta \in A^1(\End(E))$ defines a Higgs bundle.\\

More generally, there is a notion of pluriharmonicity for metrics on a holomorphic vector bundle equipped with a flat $\lambda$-connection, see \cite[section 2.2.1]{Mochizuki_GeoTopo}.

\subsection{Tame harmonic bundles}

\begin{defin}[Harmonic bundle]
A harmonic bundle on a complex manifold is a Higgs bundle endowed with a pluriharmonic metric, or equivalently a flat bundle equipped with a pluriharmonic metric.
\end{defin}

\begin{defin}[Tame harmonic bundle]
Let $X$ be a complex manifold and $D \subset X$ a simple normal crossing divisor. A harmonic bundle $(\cE, \theta, h)$ on $U := X \backslash D$ is called tame if there exists a logarithmic Higgs bundle on $(X,D)$ extending $(\cE, \theta)$. 
\end{defin}

In that case, the characteristic polynomial of the Higgs field $\theta$, which is a polynomial whose coefficients are holomorphic symmetric differential forms on $U$, extends to $X$ as an element of $\Sym( \Omega^1_X(\log D)) [T]$. This property turns out to be sufficient to show that a harmonic bundle $(\cE, \theta, h)$ on $U := X \backslash D$ is tame. Note that the element of $\Sym( \Omega^1_X(\log D)) [T]$ does not depend on the extension, because its restriction to $U$ is fixed. In particular, the eigenvalues of the residues are independent of the extension.

\begin{theorem}[Simpson, Mochizuki, cf. {\cite[Proposition 2.53]{Mochizuki_GeoTopo}}]\label{tame harmonic metric are moderate}
Let $X$ be a complex manifold and $D \subset X$ be a simple normal crossing divisor. If $(\mathscr{E}, \nabla, h)$ (resp. $(\cE, \theta, h)$) is a tame harmonic bundle on $ X \backslash D$, then the metric $h$ is moderate and the corresponding filtration according to the order of growth on $\mathscr{E}$ (resp. $\cE$) defines a parabolic bundle on $(X,D)$.
\end{theorem}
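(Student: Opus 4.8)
The plan is to reduce everything to a local norm estimate near $D$ and to recognise that both \emph{moderateness} and the \emph{locally abelian} property follow from two-sided bounds on the norms of flat (resp.\ holomorphic) sections. Since the statement is local on $X$, I would fix a point $P \in D$, choose an admissible coordinate system $(U, \psi)$ with $D \cap U = \{z_1 \cdots z_l = 0\}$, and work on the polydisk $U \simeq (\Delta^*)^l \times \Delta^{n-l}$. The crucial structural input is tameness: by hypothesis the characteristic polynomial of $\theta$ lies in $\Sym(\Omega^1_X(\log D))[T]$, so $\theta$ has at worst a logarithmic pole and its residues $\mathrm{res}_{D_i}(\theta)$ have well-defined eigenvalues with controlled real parts. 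Equivalently, if one equips $U$ with a Poincar\'e-type complete K\"ahler metric (with a cusp along each $D_i$), tameness is exactly the condition that renders $\theta$ bounded for this metric; via Hitchin's equation $F_{\nabla^u} = -[\theta, \theta^{\star}]$ the curvature of the Chern connection is then bounded as well, so the harmonic bundle has bounded geometry near $D$.

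The analytic heart is the \emph{norm estimate}, which I would first establish in one variable (the case $l = 1$, $n = 1$), following Simpson. For a nonzero holomorphic section $s$ of $\cE$, the function $\log \Vert s \Vert^2_h$ is subharmonic up to a curvature correction; boundedness of the curvature in the Poincar\'e metric turns this into a differential inequality with controlled right-hand side. A maximum-principle argument, together with an ODE analysis of the metric in the radial direction governed by $\mathrm{res}(\theta)$, then yields a two-sided bound $\Vert s \Vert_h \asymp |z|^{a}(-\log|z|)^{b}$, where $a$ is a real part of an eigenvalue of the residue and the logarithmic power $b$ records its nilpotent part (the attached weight filtration). This is precisely the assertion that $h$ is \emph{adapted} to a filtration: the exponents $a$ that occur are finite in number and determine the jumps, so the sheaves $\cE^{\alpha}$ of sections of growth order $\geq \alpha$ are coherent, locally free, and the filtration is locally a direct sum of rank-one pieces.

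For the higher-dimensional case I would argue by induction on $l$, the number of local components of $D$, which is Mochizuki's asymptotic analysis. Restricting to transversal punctured disks $\{z_2 = c_2, \dots\}$ reduces the leading behaviour in $z_1$ to the one-variable estimate, fiberwise and uniformly in the remaining coordinates; one then checks compatibility of the filtrations coming from the different components along the intersections $D_i \cap D_j$, so that the multi-indexed filtration $\cE^{\alpha}$, $\alpha \in \bR^I$, satisfies the three parabolic axioms and is locally abelian. The same estimates applied to the flat bundle $(\mathscr{E}, \nabla)$ — whose flat sections are controlled through the identity $\nabla^u v = -(\theta + \theta^{\star})v$ and the same bounded-geometry input — give the parabolic structure on $\mathscr{E}$. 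Finally, moderateness is the weak consequence that all these exponents are bounded, so every flat/holomorphic section has at worst polynomial growth and $\cup_\alpha \cE^\alpha$ is a meromorphic (torsion-free coherent $\cO_X[*D]$) sheaf.

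I expect the main obstacle to be exactly the norm estimate and, above all, its \emph{uniformity} in the higher-dimensional induction near the deep strata of $D$: controlling how the one-variable asymptotics in $z_1$ degenerate as one approaches $\{z_1 = z_2 = 0\}$ requires the full strength of Mochizuki's a priori estimates for the Hitchin equation on a product of cusps, rather than a naive slicing argument. The compatibility of the several residue filtrations — commutativity of the associated nilpotent endomorphisms and the resulting multi-weight filtration — is the delicate point that upgrades the mere coherence of each $\cE^{\alpha}$ to the locally abelian structure required for a parabolic bundle.
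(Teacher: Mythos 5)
The first thing to note is that the paper contains no proof of this statement to compare against: it is imported as a black box, attributed to Simpson (curve case) and Mochizuki (\cite[Proposition 2.53]{Mochizuki_GeoTopo}), and the paper's later arguments simply invoke it. So the only meaningful comparison is with those references, and your sketch does reproduce their architecture faithfully: reduce to an admissible polydisk, use tameness to control $\theta$ and hence (via $\Theta + \theta\wedge\theta^{\star} + \theta^{\star}\wedge\theta = 0$) the curvature near $D$, prove Simpson's one-variable norm estimate $\Vert s \Vert_h \asymp |z|^{a}(-\log|z|)^{b}$, then run Mochizuki's higher-dimensional asymptotic analysis to get the multi-indexed filtration and the locally abelian property.

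As a proof, however, the proposal has genuine gaps --- ones you partly flag yourself, which is honest but does not close them. First, the entire analytic content of the theorem is the two-sided norm estimate, and your paragraph about it is a description rather than a derivation: subharmonicity of $\log\Vert s\Vert_h$ plus curvature bounded in the Poincar\'e metric gives, at best, a one-sided bound of the wrong shape; Simpson's argument requires constructing the model (diagonal) harmonic metric attached to the residue data and comparing $h$ to it, and it is the \emph{lower} bound --- which is what makes the jumping exponents well defined and finite in number --- that is hardest and is nowhere produced in your text. Second, the passage from norm estimates to the conclusion is not formal: coherence of each $\cE^{\alpha}$, the semicontinuity axiom, and above all the locally abelian property do not follow from ``the exponents are finite in number''; Mochizuki establishes them through a detailed local decomposition of the harmonic bundle near the deep strata (compatibility of the commuting residues and their weight filtrations), which is precisely the point you label ``the delicate point'' and then leave untouched. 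Third, the flat case $(\mathscr{E},\nabla,h)$ is not a corollary of the Higgs case via the single identity $\nabla^u v = -(\theta+\theta^{\star})v$; it needs the separate norm estimates for flat sections (Simpson's table comparing the growth filtration with the Deligne--Manin extension). Also, a small conceptual slip: boundedness of $\theta$ with respect to the Poincar\'e-type metric is not the \emph{definition} of tameness but a theorem of Simpson/Mochizuki, so it cannot be taken as the starting hypothesis without proof. In short, your proposal is an accurate roadmap of the cited literature, but each of the three pillars it rests on is invoked rather than established.
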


\subsection{Deligne-Manin filtration}\label{Deligne-Manin filtration}
Let $X$ be a complex manifold and $D = \cup_I D_i\subset X$ be a simple normal crossing divisor. Let $(\mathscr{E}, \nabla)$ be a meromorphic bundle on $(X,D)$ equipped with a regular meromorphic connection. For every $\alpha \in {\bR}^I$, let $\mathscr{E}^{\alpha}$ be the unique locally-free $\cO_X$-module of finite rank contained in $\mathscr{E}$ such that $\nabla$ induces a connection with logarithmic singularities $\nabla : \mathscr{E}^{\alpha} \arrow \mathscr{E}^{\alpha} \otimes \Omega^1_X( \log D)$ such that the real part of the eigenvalues of the residue of $\nabla$ along $D_i$ belongs to $\left[ \alpha_i , \alpha_i + 1 \right)$, cf. \cite[Proposition 5.4]{Deligne-singuliers-reguliers}. We call this filtration Deligne-Manin filtration. The meromorphic bundle $\mathscr{E}$ equipped with this filtration defines a parabolic bundle $\underline{\mathscr{E}}$, and $(\underline{\mathscr{E}}, \nabla)$ is a filtered regular meromorphic connection bundle on $(X,D)$.\\

Let now $(\mathscr{E}, \nabla, h)$ be a tame harmonic bundle on $U := X \backslash D$. There exists a unique meromorphic bundle on $(X,D)$ equipped with a regular meromorphic connection extending $(\mathscr{E}, \nabla)$ (the so-called Deligne's extension). When equipped with the Deligne-Manin filtration introduced above, this defines a canonical filtered regular meromorphic connection bundle $(\underline{\mathscr{E}}^{DM}, \nabla)$ on $(X,D)$ which extends $(\mathscr{E}, \nabla)$. We have also the filtered regular meromorphic connection bundle $(\underline{\mathscr{E}}^{h}, \nabla)$ on $(X,D)$ defined using the filtration according to the order of growth with respect to $h$, cf. Theorem \ref{tame harmonic metric are moderate}.
The following result is well-known and is a direct consequence of the table in \cite[p.720]{Simpson_open}.

\begin{lem}\label{DM=h} Let $X$ be a complex manifold and $D  \subset X$ be a simple normal crossing divisor. Let $ (\mathscr{E}, \nabla, h)$ be a tame harmonic bundle on $X \backslash D$, and $(\cE, \theta)$ be the corresponding Higgs bundle. Then the following properties are equivalent:
\begin{itemize}
\item The eigenvalues of the residues of $\theta$ along the irreducible components of $D$ are purely imaginary.
\item The filtered regular meromorphic connection bundles $(\underline{\mathscr{E}}^{DM}, \nabla)$ and  $(\underline{\mathscr{E}}^{h}, \nabla)$ are canonically isomorphic.
\end{itemize}	
\end{lem}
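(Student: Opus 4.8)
The plan is to prove Lemma \ref{DM=h} by translating the equivalence into a direct comparison of two filtrations on the same meromorphic bundle, both controlled by the residue data. The key observation is that both filtrations $(\underline{\mathscr{E}}^{DM}, \nabla)$ and $(\underline{\mathscr{E}}^{h}, \nabla)$ are defined on Deligne's canonical meromorphic extension of $(\mathscr{E}, \nabla)$, so the question is not whether the two underlying meromorphic bundles agree (they always do), but whether the parabolic filtrations by weight coincide. The Deligne--Manin filtration $\mathscr{E}^{\alpha}$ is defined purely in terms of the real parts of the eigenvalues of the residue $\text{res}_{D_i}(\nabla)$, while the metric filtration $\mathscr{E}^{\alpha}_h$ (sections of order of growth bigger than $\alpha$) is defined analytically by the asymptotic behaviour of $\Vert s \Vert_h$ near $D$. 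The heart of the matter is therefore the precise dictionary, established by Simpson and recorded in the table in \cite[p.720]{Simpson_open}, relating these two sets of data for a tame harmonic bundle.

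First I would reduce to the local model near a smooth point of a single component $D_i$, since both filtrations are defined componentwise and the simple normal crossing case follows by taking intersections of the one-variable filtrations. On a punctured disk, a tame harmonic bundle is governed by a finite set of invariants: for each flat section (or each generalized eigenspace of the monodromy / residue), Simpson's table gives the relation between the eigenvalue of the residue of the flat connection $\nabla$, the eigenvalue of the residue of the Higgs field $\theta$, and the parabolic weight measuring the growth rate of the harmonic norm. The essential content is that if $\beta$ denotes the eigenvalue of $\text{res}_{D_i}(\theta)$ and $\gamma$ the corresponding eigenvalue of $\text{res}_{D_i}(\nabla)$, then the growth exponent of the harmonic metric is governed by $\text{Re}(\gamma)$, and the bridge between the two connection-theoretic data involves both the real and imaginary parts of the residue eigenvalues. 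The crucial point is that the Deligne--Manin weight reads off $\text{Re}(\gamma)$ normalized to lie in $[\alpha_i, \alpha_i+1)$, so that $(\underline{\mathscr{E}}^{DM}, \nabla)$ and $(\underline{\mathscr{E}}^{h}, \nabla)$ assign the same weight to a given flat section \emph{precisely} when the correction term coming from $\text{Im}(\beta)$, i.e. the imaginary part of the eigenvalue of the residue of the Higgs field, vanishes.

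Concretely, I would extract from Simpson's table the two formulas expressing the metric weight and the Deligne--Manin weight in terms of the residue eigenvalues, subtract them, and observe that the difference is exactly (a real multiple of) the imaginary part of the eigenvalue of $\text{res}_{D_i}(\theta)$. This gives both implications at once: if the eigenvalues of $\text{res}_{D_i}(\theta)$ are purely imaginary then the two weight filtrations agree on every generalized eigenspace and hence the two filtered bundles are canonically isomorphic; conversely, if they agree, the vanishing of the weight difference on each eigenspace forces the imaginary part of each Higgs residue eigenvalue to vanish. The canonical isomorphism is then just the identity on the common meromorphic bundle $\mathscr{E}$, compatible with $\nabla$ by construction, and the equivalence is established.

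The main obstacle I anticipate is purely bookkeeping rather than conceptual: one must be careful with sign and normalization conventions in Simpson's table (in particular the precise way the parabolic weight, the residue of the flat connection, and the residue of the Higgs field are paired, and which real part enters the growth estimate), and with the convention that ``order of growth bigger than $\alpha$'' corresponds to a decreasing rather than increasing filtration. Since the lemma is asserted to be ``well-known and a direct consequence of the table,'' I would not reprove the table itself but simply cite it and carry out the one-line subtraction of exponents on the local model, then globalize by the componentwise structure of the simple normal crossing divisor.
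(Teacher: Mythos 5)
Your overall strategy --- reduce to the one-variable local model, read off the two weights from Simpson's table, and subtract --- is exactly the route the paper takes (the paper gives no argument beyond declaring the lemma ``a direct consequence of the table in \cite[p.720]{Simpson\_open}''). But your execution of the subtraction is backwards, and since the entire content of the lemma \emph{is} this bookkeeping, the error is fatal as written. You claim the discrepancy between the growth weight and the Deligne--Manin weight is (a real multiple of) the \emph{imaginary} part of the eigenvalue of $\mathrm{res}(\theta)$, and that equality of the two filtrations ``forces the imaginary part of each Higgs residue eigenvalue to vanish.'' Carried out as described, that proves the equivalence of the second condition with the residue eigenvalues of $\theta$ being \emph{real}; the lemma asserts equivalence with their being \emph{purely imaginary} (vanishing real part). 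These are essentially opposite conditions, so your argument terminates at the wrong statement.

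The rank-one model on $\Delta \setminus \{0\}$ shows where the real part actually enters. Take $\cE = \cO\, e$ with $h(e,e)=|z|^{2\beta}$, $\beta \in \bR$, and $\theta e = \lambda \frac{dz}{z} e$. Then $\theta^{\star}e = \bar{\lambda}\frac{d\bar{z}}{\bar{z}}e$, the Chern connection is $\partial + \beta \frac{dz}{z}$, so $\nabla e = \bigl((\beta+\lambda)\frac{dz}{z} + \bar{\lambda}\frac{d\bar{z}}{\bar{z}}\bigr)e$, and the multivalued flat section is $v = z^{-(\beta+\lambda)}\bar{z}^{-\bar{\lambda}}e$, of norm $\Vert v \Vert_h = |z|^{-2\mathrm{Re}(\lambda)}$. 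The single-valued $\nabla$-holomorphic sections generating logarithmic lattices are $u_k = z^{\mu_k}v$ with $\mu_k = \beta + 2i\,\mathrm{Im}(\lambda) + k$, $k \in \bZ$; they satisfy $\nabla u_k = \mu_k \frac{dz}{z}u_k$ and $\Vert u_k \Vert_h = |z|^{\beta + k - 2\mathrm{Re}(\lambda)}$. Hence the Deligne--Manin filtration places $u_k$ at weight $\mathrm{Re}(\mu_k) = \beta + k$, while the growth filtration places it at weight $\beta + k - 2\,\mathrm{Re}(\lambda)$: the discrepancy is exactly $2\,\mathrm{Re}(\lambda)$. The imaginary part of $\lambda$ never affects the weights --- it only twists the residue eigenvalue of $\nabla$ through $\mathrm{Im}(\mu_k) = 2\,\mathrm{Im}(\lambda)$ --- whereas the real part of $\lambda$ shifts the growth rate of flat sections relative to the residue prescription. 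So the two filtered bundles agree for all weights $\alpha$ if and only if $\mathrm{Re}(\lambda)=0$ for every eigenvalue $\lambda$ of $\mathrm{res}(\theta)$, i.e.\ precisely the purely imaginary condition of the lemma. With this correction your reduction to the local model and the componentwise globalization are fine and constitute the paper's intended proof. One further small point: your assertion that the two underlying meromorphic bundles ``always'' agree is true for tame harmonic bundles, but it is itself a consequence of the norm estimates (flat sections have moderate growth), not a formality.
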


\subsection{The non-abelian Hodge correspondence in the non-compact case}

Let $X$ be a smooth irreducible complex projective variety equipped with an ample line bundle $\cL$, and $D$ be a simple normal crossing divisor of $X$.
\begin{theorem}[Mochizuki, cf. {\cite[Theorem 1.1]{Mochizuki_GeoTopo}}]\label{Mochizuki correspondence}
Fix $\lambda \in \bC$. Let $(\underline{\cE}, D^{\lambda})$ be a flat filtered regular meromorphic $\lambda$-connection bundle on $(X,D)$. Then the following conditions are equivalent:
\begin{enumerate}
\item  $(\underline{\cE}, D^{\lambda})$ is $\mu_{\cL}$-polystable with vanishing first and second parabolic Chern classes.
\item There exists a pluriharmonic metric on $(\cE_{|X \backslash D},  D^{\lambda})$ adapted to the parabolic structure.
\end{enumerate}
Such a metric is unique up to obvious ambiguity.
\end{theorem}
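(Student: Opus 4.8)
\emph{Architecture of the argument.} The statement is a Kobayashi--Hitchin correspondence for flat filtered regular meromorphic $\lambda$-connection bundles, so the plan is to follow the strategy of Donaldson and Uhlenbeck--Yau in the form adapted to Higgs/$\lambda$-connection theory by Simpson, in its noncompact and parabolic incarnation due to Simpson \cite{Simpson_open}, Biquard \cite{Biquard} and Mochizuki \cite{Mochizuki_GeoTopo}. First I would reduce to the stable case: by definition a $\mu_{\cL}$-polystable object is a direct sum of $\mu_{\cL}$-stable objects of the same slope, and the vanishing of the parabolic Chern classes forces this common parabolic slope to be zero; conversely a pluriharmonic metric that is orthogonal for such a decomposition restricts to one on each summand. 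Hence it suffices to prove the equivalence for a \emph{stable} object $(\underline{\cE}, D^{\lambda})$ of parabolic slope zero and to recover the polystable case by forming orthogonal direct sums.

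For the implication $(2) \Rightarrow (1)$ I would run the Chern--Weil/Bochner argument. A pluriharmonic metric $h$ adapted to the parabolic structure satisfies the degree-zero pluriharmonicity equation (the $\lambda$-connection analogue of the Hermitian--Einstein equation), so for any saturated $D^{\lambda}$-invariant parabolic subsheaf the second-fundamental-form computation yields that its parabolic degree is $\leq 0$, with equality forcing an $h$-orthogonal splitting; this gives $\mu_{\cL}$-semistability and, by induction on the rank, polystability. The vanishing of the first and second parabolic Chern classes then follows by integrating the Chern--Weil forms built from the curvature of $h$: pluriharmonicity makes the relevant $(1,1)$- and $(2,2)$-forms integrate to zero, and the adaptedness of $h$ to the parabolic structure ensures that the boundary contributions along $D$ match exactly the parabolic corrections, using that $h$ is moderate and defines the parabolic structure by order of growth, cf.\ Theorem~\ref{tame harmonic metric are moderate}.

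For the hard implication $(1) \Rightarrow (2)$ I would construct the pluriharmonic metric by minimizing the Donaldson functional (equivalently, by the associated nonlinear heat flow) on the open manifold $U = X \backslash D$. Starting from an initial metric $h_0$ adapted to the parabolic weights, the crux is a priori control: a $C^0$-estimate on the relative endomorphism $h_0^{-1} h$, which comes from properness of the Donaldson functional once stability of parabolic slope zero is used to exclude any destabilizing $D^{\lambda}$-invariant parabolic subsheaf, together with interior elliptic estimates giving higher regularity on $U$. The main obstacle is the control of the asymptotics of the limiting metric near the normal crossing divisor: one must show that it remains mutually bounded with $h_0$ up to the prescribed polynomial growth, so that the resulting harmonic metric is genuinely adapted to $\underline{\cE}$ and its filtration by order of growth reproduces the given parabolic structure. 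This is exactly where Mochizuki's detailed local analysis of the boundary behaviour near the crossings is needed, refining Biquard's model computations; the noncompactness and the several-variable geometry of the divisor (the interaction of the residue data along different components $D_i$ and along their intersections) make this substantially harder than Simpson's one-variable or smooth-divisor case.

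Finally, uniqueness up to the obvious ambiguity is the standard simplicity argument: two adapted pluriharmonic metrics on a stable object differ by a positive self-adjoint automorphism commuting with $D^{\lambda}$, which stability forces to be a positive scalar; in the polystable case the ambiguity is correspondingly a block-scalar automorphism rescaling each isotypic factor of isomorphic stable summands.
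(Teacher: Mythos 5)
You should first note what the paper actually does with this statement: it is imported as a black box, attributed to Mochizuki \cite{Mochizuki_GeoTopo} (building on Simpson \cite{Simpson_open} and Biquard \cite{Biquard}), and no proof is given or sketched in the paper. So there is no in-paper argument to compare against; the benchmark is Mochizuki's own proof. Measured against that, your outline is a fair description of the architecture (reduction to the stable slope-zero case, Chern--Weil/second-fundamental-form for $(2)\Rightarrow(1)$, a priori $C^0$-control from stability for $(1)\Rightarrow(2)$, simplicity for uniqueness), but as a proof it has a concrete gap in the hard direction.

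The gap: in $(1)\Rightarrow(2)$ you never use the vanishing of the \emph{second} parabolic Chern class, yet the conclusion is false without it. Stability plus minimization of the Donaldson functional (which, on the noncompact $U = X\backslash D$, cannot be run as a global minimization as you suggest --- Simpson's method exhausts by compact pieces, solving Dirichlet problems with boundary value the initial metric $h_0$, and the $C^0$-estimate is against $h_0$) yields only a Hermitian--Einstein metric for $(\cE, D^{\lambda})$, i.e.\ one whose pseudo-curvature is harmonic with vanishing trace part, not zero. The upgrade from Hermitian--Einstein to \emph{pluriharmonic} is precisely where $\mathrm{par}\text{-}c_1 = \mathrm{par}\text{-}\mathrm{ch}_2 = 0$ enters, via a Bogomolov--Gieseker/Gauss--Bonnet type identity expressing the $L^2$-norm of the pseudo-curvature in terms of parabolic characteristic numbers; validating that identity on the open variety is itself nontrivial and requires Mochizuki's asymptotic control of adapted metrics near $D$ \cite{Mochizuki_asterisque309, Mochizuki_GeoTopo}. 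As written, your argument delivers strictly less than the theorem claims. A secondary omission: for general $\lambda$ the cited proof does not rerun the analysis per $\lambda$; for $\lambda \neq 0$ one rescales $D^{\lambda}/\lambda$ to a flat connection (pluriharmonicity for $\lambda$-connections is defined compatibly with this), so the genuinely new analytic cases are $\lambda = 0$ and $\lambda = 1$ --- a reduction your sketch should state rather than proposing a direct heat flow ``for $\lambda$-connections''.
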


In particular, this induces a natural equivalence of categories between the category of $\mu_{\cL}$-polystable flat filtered regular meromorphic connection bundles on $(X,D)$ with vanishing first and second parabolic Chern classes and the category of $\mu_{\cL}$-polystable filtered regular meromorphic Higgs bundles on $(X,D)$ with vanishing first and second parabolic Chern classes. This equivalence preserves tensor products, direct sums and duals.

\section{Singular hermitian metrics on torsion-free sheaves}\label{Singular hermitian metrics on torsion-free sheaves}
We recall here the basic definitions concerning singular hermitian metrics on torsion-free sheaves, after \cite{Berndtsson-Paun, Paun16, Paun-Takayama, Hacon-Popa-Schnell}. We follow very closely the presentation of \cite{Hacon-Popa-Schnell}, to which the reader is referred for more details.

\subsection{Singular hermitian inner products}
\begin{defin}
A singular hermitian inner product on a finite-dimensional complex
vector space $V$ is a function $\Vert \cdot \Vert : V  \arrow [0, + \infty ]$ with the following properties:
\begin{enumerate}
\item $\Vert \lambda \cdot v \Vert = |\lambda| \cdot \Vert v \Vert$ for every $\lambda \in \bC \backslash \{0\}$ and every $v \in V$, and $\Vert 0 \Vert = 0$.
\item $\Vert v + w \Vert  \leq \Vert v \Vert + \Vert w \Vert$ for every $v, w \in V$, where by convention $\infty \leq \infty$.
\item $\Vert v + w \Vert^2 + \Vert v - w \Vert^2 = 2 \cdot \Vert v \Vert^2 + 2 \cdot \Vert w \Vert^2$ for every $v, w \in V$.
\end{enumerate}
\end{defin}

Let $V_0$ (resp. $V_{\mathrm{fin}}$) be the subset of $V$ of vectors with zero (resp. finite) norm. It follows easily from the axioms that both $V_0$ and $V_{\mathrm{fin}}$ are linear subspaces of $V$. By definition, $\Vert \cdot \Vert$ is said positive definite if $V_0 = 0$ and finite if $V_{\mathrm{fin}} = V$.\\

A singular hermitian inner product $\Vert \cdot \Vert$ on $V$ induces canonically a singular hermitian inner product $\Vert \cdot \Vert^{\star}$ on its dual $V^{\star} = \Hom_{\bC}(V, \bC)$ by setting
\[ \Vert f \Vert^{\star} := \sup \left\{ \frac{|f(v)|}{\Vert v \Vert} ~\middle|~ v \in V  ~ \text{with}  ~ \Vert v \Vert \neq 0 \right\} \]

for any linear form $f \in V^{\star}$, with the understanding that a fraction with
denominator $+ \infty$ is equal to $0$. (If $V_0 = V$, then we define $\Vert f \Vert^{\star}  = 0 $ for $f = 0$, and
$\Vert f \Vert^{\star}  = + \infty $ otherwise.)
Note however that in general there is no induced singular hermitian inner product on $\det V$.\\   

Observe that the dual of a finite singular hermitian inner product is positive definite, and conversely.

\subsection{Singular hermitian metrics on vector bundles}
Let $X$ be a connected complex manifold, and let $\cA$ be a non-zero holomorphic vector bundle on $X$.

\begin{defin}(compare \cite[Definition 17.1]{Hacon-Popa-Schnell} and \cite[Definition 2.8]{Paun16})
A singular hermitian metric on $\cA$ is a function $h$ that associates to every point $x \in X$ a singular hermitian inner product $\Vert \cdot \Vert_{h,x} : \cA_x \arrow [0, + \infty ]$ on
the complex vector space $\cA_x$, subject to the following two conditions:
\begin{itemize}
\item $h$ is finite and positive definite almost everywhere, meaning that for all $x$
outside a set of measure zero, $\Vert \cdot \Vert_{h,x}$ is a hermitian inner product on $\cA_x$.
\item $h$ is measurable, meaning that the function
\[  \Vert s \Vert_{h,x} : U \arrow [0, + \infty ], x \mapsto \Vert s(x) \Vert_{h,x} \]
is measurable whenever $U \subset X$ is open and $s \in \HH^0(U, \cA)$.
\end{itemize}
\end{defin}

\begin{defin} A singular hermitian metric $h$ on $\cA$ has semi-negative curvature if the
function $x \mapsto \log  \Vert s(x) \Vert_{h,x}$ is plurisubharmonic for every local section $s \in \HH^0(U, \cA)$. It has semi-positive curvature if its dual metric $h^{\star}$ on $\cA^{\star}$ has semi-negative curvature.
\end{defin}

A priori, a singular hermitian metric $h$ is only defined almost everywhere, because its coefficients are measurable functions. However, if $h$ has semi-positive or semi-negative curvature, then the singular hermitian inner product $\Vert \cdot \Vert_{h,x}$ is unambiguously defined at each point of $X$. Note also that, if $h$ is a singular hermitian metric with semi-negative curvature, then the singular hermitian inner product $\Vert \cdot \Vert_{h,x}$ is finite at each point of $X$.  
As a converse, we have the:

\begin{lem}\label{extension of metric with semi-negative curvature} Let $\cA$ be a non-zero holomorphic vector bundle on a connected complex manifold $X$. Let $U$ be an open dense subset of $X$, and $h$ be a singular hermitian metric with semi-negative curvature on $\cA_{|U}$. The following two assertions are equivalent:
\begin{enumerate}
\item $h$ is a restriction to $U$ of a singular hermitian metric with semi-negative curvature on $\cA$,
\item the function $x \mapsto \log  \Vert s(x) \Vert_{h,x}$ is locally bounded from above in the neighborhood of every point of $X$ for every local section $s \in \HH^0(U, \cA)$.
\end{enumerate}
Moreover, the metric extending $h$ is unique when it exists.
\end{lem}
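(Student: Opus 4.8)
The implication (1) $\Rightarrow$ (2) is immediate: a plurisubharmonic function is locally bounded from above, so if $h$ is the restriction of a singular hermitian metric with semi-negative curvature on $\cA$, then for every local section $s \in \HH^0(V, \cA)$ over an open $V \subset X$ the function $\log \Vert s \Vert_{h}$ is plurisubharmonic on $V$, hence locally bounded above near every point of $V$; restricting to $V \cap U$ gives (2). The content is the converse (2) $\Rightarrow$ (1), together with uniqueness.

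To construct the extension I would work in a local holomorphic frame $e_1, \dots, e_r$ of $\cA$ over a small open set $V \subset X$. For a constant vector $a = (a_1, \dots, a_r) \in \bC^r$ put $s_a := \sum_i a_i e_i \in \HH^0(V, \cA)$ and $\varphi_a := \log \Vert s_a \Vert_h$, a plurisubharmonic function on $V \cap U$ which by (2) is locally bounded from above near every point of $V$. By the removable singularity theorem for plurisubharmonic functions across $V \setminus U$ (a pluripolar set, indeed an analytic subset in the situation $U = X \setminus D$ of all our applications), $\varphi_a$ extends uniquely to a plurisubharmonic function $\widetilde{\varphi_a}$ on $V$, given by $\widetilde{\varphi_a}(x) = \limsup_{U \ni y \to x} \varphi_a(y)$. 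Since each $v \in \cA_x$ has a unique coordinate vector $a$ in the frame, I then set
$$ \Vert v \Vert_{\widetilde{h}, x} := \exp\bigl( \widetilde{\varphi_a}(x) \bigr) . $$
On $V \cap U$ this recovers $h$, and a direct estimate (bounding $\Vert e_j \Vert_h$ above near each point via (2), so that changing the coordinate vector by $O(|y-x|)$ perturbs the norm additively by $O(|y-x|)$) shows the construction is independent of the frame; the local pieces therefore glue to a function $\widetilde{h}$ on $\cA$.

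The step I expect to require the most care is checking that $\widetilde{h}$ assigns to each $x \in V \setminus U$ a genuine \emph{singular hermitian inner product}, i.e. that the parallelogram law and the triangle inequality survive the limiting process. The key is to work with \emph{squared} norms: as $\varphi_a$ is plurisubharmonic, so are $e^{\varphi_a} = \Vert s_a \Vert_h$ and $e^{2\varphi_a} = \Vert s_a \Vert_h^2$ (post-composition with the increasing convex functions $t \mapsto e^{t}, e^{2t}$). The identity $\Vert s_{a+b} \Vert_h^2 + \Vert s_{a-b} \Vert_h^2 = 2\Vert s_a \Vert_h^2 + 2\Vert s_b \Vert_h^2$ holds on $V \cap U$ and is an \emph{affine} relation among plurisubharmonic functions; both sides are plurisubharmonic on $V \cap U$, agree there, and extend by the same $\limsup$ formula, so their extensions coincide and the sum of extensions is the extension of the sum. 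Hence the identity persists at every point of $V$. The triangle inequality propagates identically, using monotonicity of the $\limsup$-regularization, and homogeneity is clear since $\varphi_{\lambda a} = \log|\lambda| + \varphi_a$. Thus each $\Vert \cdot \Vert_{\widetilde{h}, x}$ is a singular hermitian inner product, finite and positive definite off the measure-zero set $X \setminus U$, and $\widetilde{h}$ is measurable (its local squared norms are plurisubharmonic).

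Finally $\widetilde{h}$ has semi-negative curvature: for any local section $t \in \HH^0(V, \cA)$, expressing $t$ in the frame shows that $\log \Vert t \Vert_{\widetilde{h}}$ agrees on $V \cap U$ with the plurisubharmonic function $\log \Vert t \Vert_h$ and equals, by construction, its $\limsup$-regularization, hence is plurisubharmonic on $V$. Uniqueness follows from the same rigidity: if $\widetilde{h}_1$ and $\widetilde{h}_2$ are singular hermitian metrics with semi-negative curvature on $\cA$ restricting to $h$ on $U$, then for every local section $s$ the plurisubharmonic functions $\log \Vert s \Vert_{\widetilde{h}_1}$ and $\log \Vert s \Vert_{\widetilde{h}_2}$ agree on the dense open set $U$, hence everywhere, so the two inner products coincide at each point and $\widetilde{h}_1 = \widetilde{h}_2$.
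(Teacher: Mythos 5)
Your proposal is correct and takes essentially the same approach as the paper, whose entire proof is a one-line appeal to the Riemann extension (removable singularity) theorem for plurisubharmonic functions; you have simply supplied the details that citation compresses (frame independence, persistence of the parallelogram law and triangle inequality under the $\limsup$-regularization, and uniqueness via rigidity of psh extensions). Note that, exactly like the paper's citation, your extension step tacitly requires $X \setminus U$ to be pluripolar (it is an analytic subset in every application of the lemma) rather than merely dense open --- a caveat you rightly flag in passing.
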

\begin{proof}
This is a direct application of Riemann extension theorem for plurisubharmonic functions.
\end{proof}

\subsection{Singular hermitian metrics on torsion-free sheaves}
Let $X$ be a connected complex manifold, and let $\cA$ be a non-zero torsion-free coherent sheaf on $X$. Let $X_{\cA} \subset X$ denote the maximal open subset where $\cA$ is locally free, so that $X \backslash X_{\cA}$ is a closed analytic
subset of codimension at least $2$.

\begin{defin} A singular hermitian metric on $\cA$ is a singular hermitian metric
$h$ on the holomorphic vector bundle $\cA_{| X_{\cA}}$. \\
We say that such a metric has semi-positive curvature if the pair $(E, h)$ has semi-positive curvature.
\end{defin}

\section{Weakly positive torsion-free sheaves}\label{Weakly positive torsion-free sheaves}

For the reader convenience, we recall in this section the notion of weak positivity for torsion-free coherent sheaves, due to Viehweg and later refined by Nakayama.

\begin{defin}
Let $X$ be a complex quasi-projective scheme. A coherent sheaf $\cA$ on $X$ is globally generated at a point $x \in X$ if the natural map $\HH^0(X, \cA) \otimes_{\bC} \cO_X \arrow \cA$ is surjective at $x$.
\end{defin}

\begin{defin}[Nakayama,  {\cite[Definition V.3.20]{Nakayama}}]
Let $\cA$ be a torsion-free coherent sheaf on a smooth projective complex variety  $X$. We say that $\cA$ is weakly positive at a point $x \in X$ if for every ample invertible sheaf $\cH$ on $X$ and every positive integer $\alpha  > 0$ there exists an integer $\beta > 0$ such that $\widehat{S}^{\alpha \cdot \beta} \cA \otimes_{ \cO_X} {\cH}^{\beta}$ is globally generated at $x$.\\
Here the notation $\widehat{S}^{k} \cA $ stands for the reflexive hull of the sheaf $S^{k} \cA $, i.e. $\widehat{S}^{k} \cA = i_* ( S^{k} i^*\cA)$ where $i : X_{\cA} \hookrightarrow X$ is the inclusion of the maximal open subset where $\cA$ is locally free.
\end{defin}

\begin{defin}(Restricted base locus)
Let $\cA$ be a torsion-free sheaf on a smooth projective complex variety  $X$. The restricted base locus $\bB_-(\cA)$ of $\cA$ is the subset of $X$ of points at which $\cA$ is weakly positive. It is a priori only a union of closed subvarieties of $X$.
\end{defin}

\begin{defin}
A torsion-free coherent sheaf $\cA$ is weakly positive in the sense of Viehweg if there exists a dense open subset $U \subset X$ such that $\cA$ is weakly positive at every $x \in U$, or equivalently when $\bB_-(\cA)$  is not Zariski-dense.
\end{defin}

Note that if $\cA$ is locally free, then $\cA$ is nef if and only if it is weakly positive at every $x \in X$. \\

The following result gives an analytical criterion to show that a sheaf is weakly positive.

\begin{theorem}[P{\u a}un-Takayama, cf. {\cite[Theorem 2.21]{Paun16}} and {\cite[Theorem 2.5.2]{Paun-Takayama}}] \label{Paun-Takayama} Let $X$ be a smooth projective variety, and let $\cA$ be a torsion-free coherent sheaf on $X$ equipped with a singular hermitian metric $h$ with semi-positive curvature. If $\cA$ is locally free at $x$ and $\Vert \cdot \Vert_{h,x}$ is finite (hence it is a positive-definite hermitian inner product), then $\cA$ is weakly positive at $x$ .
\end{theorem}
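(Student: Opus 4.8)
The plan is to realise weak positivity at $x$ as a consequence of the Ohsawa--Takegoshi $L^2$-extension theorem, reducing the higher-rank positivity of $(\cA,h)$ to the positivity of a single line bundle via the projectivisation of $\cA$. Since weak positivity at $x$ is a local statement there and $\cA$ is locally free in a neighbourhood of $x$, I may work over the open set $X_\cA$ where $\cA$ is locally free; because $X\setminus X_\cA$ has codimension at least $2$, the reflexive hull $\widehat S^k\cA$ is recovered by pushing forward $S^k\cA$ from $X_\cA$, and global sections extend across the bad locus, so no information is lost at $x$.

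First I would form the projective bundle $\pi:Y:=\bP(\cA)=\Proj(\Sym\cA)\to X$ with its tautological quotient line bundle $L:=\cO_{\bP(\cA)}(1)$, normalised so that $\pi_\ast L^{\otimes k}$ agrees with $S^k\cA$ over $X_\cA$ (hence with reflexive hull $\widehat S^k\cA$ after pushforward to $X$) and $R^i\pi_\ast L^{\otimes k}=0$ for $i>0$ and $k\ge 0$. The singular hermitian metric $h$ on $\cA$ induces on $L$, viewed as a quotient of $\pi^\ast\cA$, a singular hermitian metric $h_L$, and the hypothesis that $(\cA,h)$ has semi-positive curvature translates into the statement that $h_L$ has semi-positive curvature current; this is the usual dictionary between Griffiths/Nakayama semi-positivity of a bundle and positivity of its tautological line bundle. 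The decisive point is that, since $\Vert\cdot\Vert_{h,x}$ is finite and positive definite, $h_L$ restricts to a smooth, non-degenerate metric along the fibre $Y_x:=\pi^{-1}(x)\cong\bP^{r-1}$, where $r=\rk\cA$.

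Next, fix the ample line bundle $\cH$ and the integer $\alpha>0$ from the definition of weak positivity, endow $\cH$ with a smooth metric of strictly positive curvature, and for $\beta>0$ consider $F:=L^{\otimes\alpha\beta}\otimes\pi^\ast\cH^{\otimes\beta}$ on $Y$ with the induced metric. By the projection formula together with cohomology and base change, global generation of $\widehat S^{\alpha\beta}\cA\otimes\cH^{\beta}$ at $x$ is equivalent to the surjectivity of the restriction map
\[ \HH^0(Y,F)\longrightarrow \HH^0\big(Y_x,F|_{Y_x}\big), \]
so the goal becomes to extend every holomorphic section of $F|_{Y_x}$ to a global section of $F$. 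I would obtain such extensions from the Ohsawa--Takegoshi theorem applied to the submanifold $Y_x\subset Y$: writing $F=K_Y\otimes G$ with $G:=F\otimes K_Y^{-1}$ and using the adjunction $K_Y=\pi^\ast(K_X\otimes\det\cA)\otimes\cO_{\bP(\cA)}(-r)$, one finds $G=L^{\otimes(\alpha\beta+r)}\otimes\pi^\ast\big(\cH^{\otimes\beta}\otimes\det\cA^{\star}\otimes K_X^{-1}\big)$. For $\beta\gg0$ the bundle $\cH^{\otimes\beta}\otimes\det\cA^{\star}\otimes K_X^{-1}$ is ample on $X$, so $G$ carries a metric whose curvature current is semi-positive everywhere and strictly positive transverse to $Y_x$, the transverse positivity coming entirely from $\pi^\ast\cH^{\otimes\beta}$. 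Since $h_L$ is smooth and non-degenerate along $Y_x$, every section of $F|_{Y_x}$ has finite $L^2$-norm, and the extension theorem then produces the required global sections.

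The main obstacle is to arrange the argument so that a genuine (finite, non-degenerate) metric is needed only along the fibre $Y_x$ --- where finiteness of $h$ at $x$ supplies it --- while the possibly highly singular behaviour of $h_L$ elsewhere is rendered harmless because all the strict positivity is furnished by the ample twist $\pi^\ast\cH^{\otimes\beta}$ for $\beta$ large. Concretely, the delicate points are the verification that the quotient metric $h_L$ on the tautological bundle is semi-positively curved in the singular sense, and the choice of $\beta$ large enough to absorb the canonical- and determinant-bundle twists coming from the adjunction $K_Y=\pi^\ast(K_X\otimes\det\cA)\otimes\cO_{\bP(\cA)}(-r)$ into the ample contribution, so that the curvature hypotheses of the Ohsawa--Takegoshi extension theorem are genuinely met.
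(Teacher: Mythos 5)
The paper itself contains no proof of Theorem \ref{Paun-Takayama}: it is quoted as a black box from P{\u a}un \cite{Paun16} and P{\u a}un--Takayama \cite{Paun-Takayama}, so your proposal has to be measured against the proofs in those references. Your strategy is essentially theirs: pass to $\pi\colon Y=\bP(\cA)\to X$, give the tautological quotient $L=\cO_{\bP(\cA)}(1)$ the metric induced by $h$ (your ``dictionary'' is correct here: semi-positivity of $(\cA,h)$ means, by definition, semi-negativity of $(\cA^{\star},h^{\star})$, which pulls back and restricts to a semi-negatively curved metric on $\cO_{\bP(\cA)}(-1)\subset\pi^{\ast}\cA^{\star}$, so the dual metric on $L$ has semi-positive curvature current), use the finiteness of $\Vert\cdot\Vert_{h,x}$ to get a smooth strictly positive metric along the fibre $Y_x$, and produce the sections demanded by Nakayama's definition by $L^2$-extension from $Y_x$; the adjunction $K_Y=\pi^{\ast}(K_X\otimes\det\cA)\otimes\cO_{\bP(\cA)}(-r)$, where $r=\rank\cA$, and the quantifiers (fix $\cH$ and $\alpha$, then take $\beta\gg0$) are handled correctly.

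There is, however, a genuine gap, and it sits in your opening reduction. When $X\setminus X_{\cA}\neq\emptyset$, the space on which your whole argument must run is the non-compact manifold $Y^{\circ}:=\bP(\cA_{|X_{\cA}})$; indeed your $\HH^0(Y,F)$ is really $\HH^0(Y^{\circ},F)$, since this is what computes $\HH^0(X,\widehat{S}^{\alpha\beta}\cA\otimes\cH^{\beta})$ through $\widehat{S}^{k}\cA=i_{\ast}(S^{k}i^{\ast}\cA)$. Every Ohsawa--Takegoshi-type theorem requires the ambient space to be projective, Stein, weakly pseudoconvex K\"ahler, or at least complete K\"ahler, and $Y^{\circ}$ is none of these for free: a plurisubharmonic exhaustion of $Y^{\circ}$ would induce one of $X_{\cA}$ (take the maximum along the compact fibres of the submersion $\pi$), while a plurisubharmonic exhaustion of $X_{\cA}$ would extend across the codimension $\geq 2$ set $X\setminus X_{\cA}$ (Grauert--Remmert) and hence be constant. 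So ``codimension $\geq 2$, hence no information is lost at $x$'' misses the point: what is lost is not information at $x$ but the global pseudoconvexity needed to run the $L^{2}$ machinery (your ``for $\beta\gg0$ this bundle is ample'' likewise treats $Y^{\circ}$ as projective). As written, your proof is complete only when $\cA$ is locally free on all of $X$; the torsion-free case is exactly where the cited proofs must work harder. One standard repair: $Y^{\circ}$ is the complement of an analytic subset of a compact K\"ahler manifold (resolve the closure of $Y^{\circ}$ inside $\Proj_{\cO_X}(\Sym\cA)$), hence carries a complete K\"ahler metric, and one invokes an extension theorem valid in the complete K\"ahler setting; the sections so produced over $X_{\cA}$ are then automatically global sections of $\widehat{S}^{\alpha\beta}\cA\otimes\cH^{\beta}$ on $X$. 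A second, smaller, imprecision: ``semi-positive everywhere and strictly positive transverse to $Y_x$'' is not a hypothesis under which any standard extension theorem applies, since $\pi^{\ast}\cH^{\beta}$ is degenerate along \emph{every} fibre of $\pi$, not just in directions tangent to $Y_x$; you need either the Manivel--Demailly form of the theorem, realizing $Y_x$ (together with finitely many further fibres, over which you extend by zero) as the zero locus of a section of $\pi^{\ast}(\cH')^{\oplus\dim X}$ and checking the curvature inequality against that bundle, or --- once the ambient space is honestly projective --- the usual device of putting all strict positivity into the genuinely ample factor $\cO_{\bP(\cA)}(1)\otimes\pi^{\ast}\cH^{k}$ while keeping the singular semi-positive metric on the complementary factor.
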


\section{Proof of Theorem \ref{semi-negativity for harmonic bundles}}

Let $X$ be a complex manifold, and $D \subset X$ be a simple normal crossing divisor. Set $U := X \backslash D$ and $j : U \hookrightarrow X$ the inclusion. Let $(\cE, \theta , h)$ be a tame harmonic bundle on $U$, and $\cA$ be a subsheaf of $\cE$. The hermitian metric $h$ induces a (possibly singular) hermitian metric $h_{\cA}$ on $\cA$. \\

Let $\cA^{\diamond}$ be the subsheaf of $j_* \cA$ whose sections have sub-polynomial growth with respect to $h_{\cA}$. Equivalently, $ \cA^{ \diamond} := j_* \cA \cap \cE^{\diamond}$, where  $\cE^{\diamond}$ denotes the subsheaf of $j_* \cE$ whose sections have sub-polynomial growth with respect to $h_{\cE}$. As $\cE^{\diamond}$ is a locally-free sheaf of finite rank (cf. Theorem \ref{tame harmonic metric are moderate}), it follows that $\cA^{\diamond}$ is a torsion-free coherent sheaf.\\

Assume now that $\cA$ is contained in the kernel of $\theta$. It follows from the next lemma that $h_{\cA}$ has semi-negative curvature.

\begin{lem} Let $(\cE, \theta , h)$ be a harmonic bundle on a complex manifold $X$.
If $s \in \HH^0(X, \cE)$ is a section of $\cE$ which satisfies $\theta(s) =0$, then the function
\[ \log \Vert s \Vert_h : X \arrow [- \infty, + \infty ), \, x \mapsto \log \Vert s(x) \Vert_h \]
is plurisubharmonic.
\end{lem}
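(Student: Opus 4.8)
The plan is to prove the stronger pointwise assertion that $u := \Vert s \Vert_h^2 = h(s,s)$ satisfies $i\,\partial\overline{\partial}\log u \ge 0$, since $\log\Vert s\Vert_h = \tfrac12\log u$ and plurisubharmonicity is unaffected by the factor $\tfrac12$. If $s\equiv 0$ there is nothing to prove. Otherwise the zero locus $Z:=\{s=0\}$ is a nowhere dense analytic subset, and $\log\Vert s\Vert_h$ is continuous into $[-\infty,+\infty)$, hence locally bounded above; so once I show it is plurisubharmonic on $X\setminus Z$, the Riemann-type removable singularity theorem for plurisubharmonic functions (as invoked in the proof of Lemma~\ref{extension of metric with semi-negative curvature}) extends plurisubharmonicity across $Z$. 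It therefore suffices to work on $X\setminus Z$.

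First I would compute $i\,\partial\overline{\partial}\log u$ using the Chern connection $\nabla^u=\partial_{\cE}+\overline{\partial}_{\cE}$ of $(\cE,h)$. Since $s$ is holomorphic, $\overline{\partial}_{\cE}s=0$, and compatibility of $\nabla^u$ with $h$ gives $\partial u = h(\partial_{\cE}s,s)$, $\overline{\partial}u = h(s,\partial_{\cE}s)$, and
\[
\partial\overline{\partial}u \;=\; h(\partial_{\cE}s,\partial_{\cE}s) + h\big(s,\overline{\partial}_{\cE}\partial_{\cE}s\big) \;=\; h(\partial_{\cE}s,\partial_{\cE}s) + h(s, F\cdot s),
\]
where $F:=\overline{\partial}_{\cE}\partial_{\cE}+\partial_{\cE}\overline{\partial}_{\cE}$ is the curvature of $\nabla^u$ and I used $\overline{\partial}_{\cE}s=0$ to replace $\overline{\partial}_{\cE}\partial_{\cE}s$ by $F\cdot s$. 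Plugging this into $\partial\overline{\partial}\log u = u^{-1}\partial\overline{\partial}u - u^{-2}\,\partial u\wedge\overline{\partial}u$ and multiplying by $i$, I would split the result as
\[
i\,\partial\overline{\partial}\log u \;=\; \Big(\tfrac{i}{u}\,h(\partial_{\cE}s,\partial_{\cE}s) - \tfrac{i}{u^2}\,\partial u\wedge\overline{\partial}u\Big) \;+\; \tfrac{i}{u}\,h(s,F\cdot s).
\]
The first bracket is non-negative: evaluated on a holomorphic tangent vector it reduces to $\Vert s\Vert^2\,\Vert\partial_{\cE}s\Vert^2-|h(\partial_{\cE}s,s)|^2\ge 0$ by Cauchy–Schwarz, which is the classical computation of the curvature of the norm of a holomorphic section.

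Everything then comes down to the sign of the curvature term $\tfrac{i}{u}h(s,F\cdot s)$, and this is exactly where pluriharmonicity and the hypothesis $\theta(s)=0$ are used. Taking the $(1,1)$-component of the flatness relation $\nabla^2=0$ for $\nabla=\partial_{\cE}+\overline{\partial}_{\cE}+\theta+\theta^{\star}$ yields the Hitchin equation $F=-(\theta\wedge\theta^{\star}+\theta^{\star}\wedge\theta)$. In $(\theta^{\star}\wedge\theta)\cdot s$ the Higgs field acts first, so this term vanishes because $\theta(s)=0$, leaving $h(s,F\cdot s)=-h\big(s,(\theta\wedge\theta^{\star})\cdot s\big)$. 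Writing $\theta=\sum_a\theta_a\,dz_a$ locally and using that $\theta^{\star}$ is the $h$-adjoint of $\theta$, one finds
\[
i\,h(s,F\cdot s) \;=\; i\sum_{a,b} h\big(\theta^{\star}_a s,\theta^{\star}_b s\big)\,dz_b\wedge d\bar z_a,
\]
which is the Hermitian $(1,1)$-form attached to the Gram matrix of the vectors $\theta^{\star}_a s$, hence positive semi-definite. Thus both terms are $\ge 0$ and $\log\Vert s\Vert_h$ is plurisubharmonic.

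I expect the only genuine obstacle to be this last sign analysis: the Chern curvature $F=-[\theta,\theta^{\star}]$ of a harmonic bundle is not sign-definite in general, so the argument truly relies on $\theta(s)=0$ to discard the $\theta^{\star}\wedge\theta$ contribution and convert the surviving term into a Gram matrix. The remaining care is purely bookkeeping—tracking the conjugations in the metric-compatibility identities and the order in which $\theta$ and $\theta^{\star}$ act—while the rest is the standard plurisubharmonicity computation for $\log$ of a holomorphic section.
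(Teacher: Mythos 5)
Your proof is correct, but it reaches the plurisubharmonicity of the logarithm by a genuinely different mechanism than the paper. The curvature input is the same in both arguments: the $(1,1)$-part of the flatness of $\nabla = \partial_{\cE} + \overline{\partial}_{\cE} + \theta + \theta^{\star}$ gives $F = -(\theta\wedge\theta^{\star} + \theta^{\star}\wedge\theta)$, the hypothesis $\theta(s)=0$ kills the $\theta^{\star}\wedge\theta$ contribution, and the surviving term is the semi-positive $(1,1)$-form attached to the Gram matrix of the $\theta^{\star}_a s$ (in the paper's notation, $-i(\Theta s,s)_h = i(\theta^{\star}(s),\theta^{\star}(s))_h \geq 0$). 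Where you differ is in handling the passage from this curvature inequality to plurisubharmonicity of $\log\Vert s\Vert_h$: you perform the classical Griffiths-type computation of $i\partial\overline{\partial}\log\Vert s\Vert_h^2$ on the complement of the zero locus $Z$ of $s$, absorb the first-order term by Cauchy--Schwarz, and then extend across $Z$ by the removable singularity theorem for plurisubharmonic functions. The paper instead never divides by $\Vert s\Vert_h^2$ and never sees $Z$: it proves that $\Vert s\Vert_h^2$ itself is plurisubharmonic (via Schmid's formula $i\partial\overline{\partial}\Vert s\Vert_h^2 = -i(\Theta s,s)_h + i\Vert\partial_{\cE}s\Vert_h^2$, where both terms are $\geq 0$ and no Cauchy--Schwarz is needed), and then upgrades to the logarithm by the observation, attributed to Raufi, that $\log v$ is plurisubharmonic if and only if $v\cdot e^{2\mathrm{Re}(q)}$ is plurisubharmonic for every polynomial $q$; this is applied to the sections $s\cdot e^{q}$, which are still holomorphic and killed by $\theta$. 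Your route is more self-contained and elementary (it is the standard second-variation computation), at the price of the reduction to $X\setminus Z$; for completeness you should note there that the plurisubharmonic extension across $Z$ produced by the removable singularity theorem actually coincides with $\log\Vert s\Vert_h$ on $Z$, which is immediate since $s$ vanishes at each point of $Z$ and $h$ is smooth, so $\log\Vert s(w)\Vert_h \to -\infty$ there. The paper's trick buys exactly the avoidance of these two points (the zero locus and the Cauchy--Schwarz term), while your argument has the merit of not requiring the auxiliary family of sections $s\cdot e^{q}$.
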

\begin{proof}

Denoting by $\partial_{\cE} + \overline{\partial}_{\cE}$ the Chern connection associated to the hermitian metric $h$ on the holomorphic bundle $\cE$ and by $\theta^{\star}$ the adjoint of $\theta$ with respect to $h$, we know by assumption that the connection $\nabla := \partial_{\cE} + \overline{\partial}_{\cE} + \theta + \theta^{\star}$ on the $\cC^{\infty}$-vector bundle $\cC^{\infty}_X \otimes_{\cO_X} \cE$ is flat. From this, it follows that the curvature $\Theta := (\partial_{\cE} + \overline{\partial}_{\cE} )^2 $ of $(\cE, h)$ satisfies the formula:
$$  \Theta + \theta \wedge \theta^{\star} + \theta^{\star} \wedge \theta = 0.  $$
On the other hand, if $s$ is a section of $\cE$, we have that
$$ i \cdot \partial \overline{\partial} \Vert s \Vert_h^2 = -i \cdot (\Theta s, s)_h + i \cdot \Vert \partial_{\cE} (s) \Vert_h^2 $$
(see for example \cite[(7.13)]{Schmid}). If moreover $\theta(s) = 0$, it follows that
\begin{align*}
i \cdot \partial \overline{\partial} \Vert s \Vert_h^2 \geq -i \cdot (\Theta s, s)_h &= - i \cdot (\theta(s), \theta(s))_h  + i \cdot (\theta^{\star}(s), \theta^{\star}(s))_h \\
           &= i \cdot (\theta^{\star}(s), \theta^{\star}(s))_h \\
           &\geq 0,
\end{align*}
hence the function $ \Vert s \Vert_h : X \arrow [0, + \infty ), \, x \mapsto  \Vert s(x) \Vert_h $ is plurisubharmonic.\\
Now, recall that given a non-negative valued function $v$, $\log v$ is plurisubharmonic
if and only if $v \cdot e^{2 \cdot \text{Re} (q)}$ is plurisubharmonic for every polynomial $q$ (we learned this observation from \cite{Raufi15}). The preceding computation shows that
for every polynomial $q$ the function $x \mapsto  \Vert s(x) \Vert_h^2 \cdot e^{2 \cdot \text{Re} (q(x))} = \Vert s(x) \cdot e^{ q(x)}  \Vert_h^2 $ is plurisubharmonic, because $s \cdot e^q$ is also a holomorphic section of $\cE$ which satisfies $\theta(s \cdot e^q) = e^q \cdot \theta(s) =0$. This completes the proof.
\end{proof}

Let us now prove that there exists a unique singular hermitian metric with semi-negative curvature on $\cA^{\diamond}$ whose restriction to $U$ is $h_{\cA}$. In view of Lemma \ref{extension of metric with semi-negative curvature}, this will follow from the 

\begin{lem} Let $X$ be a complex manifold, $D \subset X$ be a simple normal crossing divisor and $(\cE, \theta,h)$ be a tame harmonic bundle on $U := X \backslash D$. 
If $s$ is a section of $\cE^{\diamond}$ whose restriction to $U$ satisfies $\theta(s_{|U} ) =0$, then the function
\[ \Vert s_{|U} \Vert_h : U \arrow [0, + \infty ), \, x \mapsto \Vert s_{|U} (x) \Vert_h \]

is locally bounded in the neighborhood of every point in $X$. 

\end{lem}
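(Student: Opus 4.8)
The statement is local and stable under shrinking, so I would fix a point $P \in D$ and an admissible coordinate system $(\Delta^n, \psi)$ around it, with $D = \{z_1 \cdots z_l = 0\}$ and $U \cap \Delta^n = (\Delta^*)^l \times \Delta^{n-l}$; the only case requiring work is $P \in D$, since on $U$ the function $\|s_{|U}\|_h$ is continuous. The plan is to combine two facts about $u := \log \Vert s_{|U} \Vert_h$. First, since $s$ is a section of $\cE^{\diamond}$, it has sub-polynomial growth with respect to $h$, i.e.\ for every $\epsilon > 0$ there is a constant $C_\epsilon$ with $\Vert s(z) \Vert_h \le C_\epsilon \prod_{i=1}^l |z_i|^{-\epsilon}$ near $P$; after taking logarithms this becomes
\[ u(z) \le \log C_\epsilon - \epsilon \sum_{i=1}^l \log |z_i| \qquad (\ast) \]
for every $\epsilon > 0$. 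Second, because $\theta(s_{|U}) = 0$, the previous lemma shows that $u$ is plurisubharmonic on $U$. The whole problem thus reduces to a purely analytic assertion: a plurisubharmonic function $u$ on $(\Delta^*)^l \times \Delta^{n-l}$ satisfying $(\ast)$ for all $\epsilon > 0$ is bounded above in a neighbourhood of the origin.

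To prove this I would pass to the logarithmic coordinates $t_i = \log |z_i| \in (-\infty, 0)$ and consider the maximum-modulus function
\[ M(t_1, \dots, t_l) := \sup \left\{ u(z) ~\middle|~ |z_i| = e^{t_i} ~\text{for}~ i \le l, ~ |z_j| \le \tfrac12 ~\text{for}~ j > l \right\}, \]
which is finite because $u$ is upper semicontinuous and the polycircle is a compact subset of $U$. The key point is that $M$ is a convex function of $(t_1, \dots, t_l)$ on $(-\infty, 0)^l$. This is the standard convexity of the maximum of a plurisubharmonic function over polycircles: given $t^1, t^2$ and a near-extremal configuration $z^0$ at the midpoint $t^0 = \tfrac12(t^1 + t^2)$, one lifts the real segment to the holomorphic curve $w \mapsto \left( z_i^0 \, e^{(w - 1/2)(t^2_i - t^1_i)} \right)_i$ on the strip $\{0 \le \mathrm{Re}\, w \le 1\}$, along which $u$ pulls back to a subharmonic function whose moduli depend only on $\mathrm{Re}\, w$ (so it is bounded above on the strip) and which is $\le M(t^1)$, resp.\ $\le M(t^2)$, on the two boundary lines; the three-lines inequality then gives $M(t^0) \le \tfrac12(M(t^1) + M(t^2))$. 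In these coordinates the estimate $(\ast)$ reads $M(t) \le \log C_\epsilon - \epsilon \sum_i t_i$ for every $\epsilon > 0$.

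Finally I would play convexity against $(\ast)$. Fix any base point $t^0 \in (-\infty,0)^l$ and any $v \in \bR^l$ with all components $\le 0$. The convex function $\lambda \mapsto M(t^0 + \lambda v)$ on $[0, \infty)$ satisfies $M(t^0 + \lambda v) \le \log C_\epsilon + \epsilon(|t^0|_1 + \lambda |v|_1)$ for $\lambda$ large, so its slope at infinity is $\le \epsilon |v|_1$ for every $\epsilon$, hence $\le 0$; a convex function of one variable with slope $\le 0$ at infinity is non-increasing, whence $M(t^0 + \lambda v) \le M(t^0)$ for all $\lambda \ge 0$. Writing an arbitrary $t$ with all $t_i \le c$ as $t = (c, \dots, c) + v$ with $v_i = t_i - c \le 0$ yields $M(t) \le M(c, \dots, c) < \infty$, i.e.\ $u$, and therefore $\Vert s_{|U} \Vert_h$, is bounded above on $\{|z_i| \le e^c, \, |z_j| \le 1/2\}$, a neighbourhood of $P$. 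This proves the lemma; it is also exactly condition (2) needed to invoke Lemma \ref{extension of metric with semi-negative curvature} in the main argument.

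The main obstacle is the analytic assertion of the second paragraph, namely upgrading sub-polynomial growth to a genuine pointwise upper bound. I would stress that boundedness of the circular means alone does not suffice — $\mathrm{Re}(1/z)$ has vanishing means yet is unbounded above (and, consistently, violates $(\ast)$) — so the argument must feed the pointwise estimate $(\ast)$ into plurisubharmonicity; using the convexity of the maximum-modulus function $M$, rather than of the mean, is precisely what makes the growth hypothesis and the semi-negativity interact correctly.
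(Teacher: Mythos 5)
Your proof is correct, and it takes a genuinely different route from the paper's. You feed exactly two inputs into a purely function-theoretic statement: the previous lemma (so $u := \log \Vert s_{|U}\Vert_h$ is plurisubharmonic on $U$) and the very definition of $\cE^{\diamond}$ (sub-polynomial growth), and you then prove that a psh function on $(\Delta^*)^l \times \Delta^{n-l}$ of growth $o\bigl(\sum_i \log (1/|z_i|)\bigr)$ is bounded above near the origin, by the classical convexity of the maximum over polycircles (three-lines/Phragm\'en--Lindel\"of) played against the growth bound --- in effect a removable-singularity theorem for psh functions along a normal crossing divisor. The paper instead reduces to $n=1$ by slicing plus the maximum principle, and in dimension one it invokes Simpson's norm estimates for tame harmonic bundles: the residue of $\theta$ acting on $\cE^{\diamond}_{|0}$, its generalized eigenspaces and the weight filtration of the nilpotent part, the dictionary ``$s(0) \in W_k$ if and only if $\Vert s(z) \Vert_h \leq C(-\log|z|)^{k/2}$'', and Schmid's lemma $\ker(N) \subset W_0$; the hypothesis $\theta(s)=0$ puts $s(0)$ in the kernel of the residue, hence in $W_0$, hence $\Vert s \Vert_h$ is bounded. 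Your argument is more elementary and self-contained: it avoids the norm estimates and weight filtrations altogether, and in fact it never uses tameness except through the definition of $\cE^{\diamond}$, so it proves the lemma for an arbitrary harmonic bundle and any $\theta$-flat section of sub-polynomial growth. What the paper's route buys in exchange is finer information --- the weight filtration pins down the exact logarithmic growth rate of $\theta$-flat sections, and the slicing yields the explicit bound $\sup_U \Vert s \Vert_h \leq \sup_{(\partial \Delta)^n} \Vert s \Vert_h$ --- and it keeps the argument aligned with the structure theory (residues, parabolic weights) used elsewhere in the paper. Two micro-points on your write-up: to run the convexity argument cleanly when $M$ might take the value $-\infty$, replace $u$ by $\max(u,0)$ (still psh, still satisfying $(\ast)$ after enlarging $C_\epsilon$); and note that your monotonicity $M(t) \leq M(c,\dots,c)$ for $t \leq c$ recovers, letting $c \to 0^-$, essentially the same distinguished-boundary estimate the paper obtains from the maximum principle.
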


\begin{proof} Clearly it is sufficient to treat the case $X = \Delta^n$ and $D = \cup_{i=1}^l D_i$, where $D_i = \{\underline{z} =(z_1, \cdots, z_n) \in \Delta^n \, | \, z_i = 0 \}$. We will show the stronger statement:

\[  \sup_{\underline{z} \in U}   \Vert s_{|U} (\underline{z}) \Vert_h \leq  \sup_{\underline{z} \in \partial X}   \Vert s_{|U} (\underline{z}) \Vert_h,  \]

where $\partial X := (\partial \Delta)^n$ is a product of circles. We claim that it is sufficient to treat the case the case $n =1$. Indeed, to bound $ \Vert s_{|U} (\underline{z}) \Vert_h$ at a point $\underline{z} \in U$, by successive applications of the case $n =1$, we get the inequalities:

\begin{align*}
 \Vert s_{|U} ((z_1, \cdots, z_n)) \Vert_h  &     \leq  \sup_{\alpha_1 \in \partial \Delta}  \Vert s_{|U} ((\alpha_1, \cdots, z_n)) \Vert_h \\
           & \leq  \sup_{\alpha_1 \in \partial \Delta, \alpha_2 \in \partial \Delta}  \Vert s_{|U} ((\alpha_1, \alpha_2, \cdots, z_n)) \Vert_h  \\
	& \cdots \\        
	& \leq  \sup_{\underline{\alpha} \in (\partial \Delta)^n}  \Vert s_{|U} ((\alpha_1, \alpha_2, \cdots, \alpha_n)) \Vert_h .
\end{align*}

It remains to prove the case $n =1$. We will first show as a consequence of Simpson's norm estimates that the function $ \Vert s_{|U} \Vert_h : U \arrow [0, + \infty ), x \mapsto \Vert s_{|U} (x) \Vert_h$ is locally bounded. It follows that the function $\log  \Vert s_{|U} \Vert_h$ extends as a plurisubharmonic function to $\Delta$, and we conclude using the maximum principle. \\

Finally, we have reduced the proof to the case where $X = \Delta$ and $D = \{ 0 \}$.
Let $R := \Res(\cE^{\diamond})$ be the residue of $\theta$ at $0$. It is a $\bC$-linear endomorphism of the complex vector space $\cE^{\diamond}_{|0}$.
Let $\cE^{\diamond}_{|0} = \bigoplus_{\lambda \in \bC} ({\cE^{\diamond}_{|0}})_{\lambda}$ be the decomposition in generalized eigenspaces of $R$. On each piece $  ({\cE^{\diamond}_{|0}})_{\lambda}$ the nilpotent
endomorphism $R - \lambda \cdot \Id $ defines a weight filtration $W_k ( ({\cE^{\diamond}_{|0}})_{\lambda})$ indexed by integers $k$, thanks to the well-known:

\begin{lem}[cf. {\cite[lemma 6.4]{Schmid}}] Let $N$ be a nilpotent endomorphism of a finite dimensional complex vector space $V$. There exists a unique finite increasing filtration $W_k (V)$ of $V$ such that:
\begin{enumerate}
\item $  N (W_k ) \subset W_{k-2}$ for every $k$,
\item $N^k$ induces an isomorphism $Gr_k^W V \xrightarrow{\sim} Gr_{-k}^W V$ for every $k \geq 0$. 
\end{enumerate}
\end{lem}
 
The sum of the weight filtrations for different eigenvalues $\lambda$ yields in turn a filtration of $ \cE^{\diamond}_{|0}$ that we still denote by $W_k$. As proved by Schmid \cite{Schmid} for variations of Hodge structures and Simpson \cite{Simpson_open} for general tame harmonic bundles, this filtration controls the growths of the sections of $\cE^{\diamond}$:\\
If $s$ be a section of $\cE^{\diamond}$ defined on $\Delta$, then $s(0) \in W_k$ if and only if there exists a constant $C>0$ such that the following estimates holds (cf. \cite[pp. 755-756]{Simpson_open}):
\[   \Vert s(z) \Vert_h \leq C \cdot  (- \log |z|)^{k/2} . \]

Let $s$ be a section of $\cE^{\diamond}$ defined on $\Delta$ such that $\theta(s) =0$. It follows that $s(0)$ belongs to the kernel of $\Res(\cE^{\diamond})$. In particular, it belongs to the generalized eigenspace where $\Res(\cE^{\diamond})$ is nilpotent. Moreover, by the lemma below, it belongs to $W_0$ and by the norm estimates it follows that $ \Vert s \Vert_h$ is bounded on $\Delta^{\star}$.

\begin{lem}[cf. {\cite[corollary 6.7]{Schmid}}] Let $N$ be a nilpotent endomorphism of a finite dimensional complex vector space $V$, and let $W_k (V)$ be the corresponding filtration of $V$.
Then $\ker( N) \subset W_0 (V)$.
\end{lem}
\end{proof}

\section{Proofs of Theorem \ref{main result} and Theorem \ref{trivial_parabolic_filtration}}

Let $X$ be a smooth projective complex variety, $\cL$ be an ample line bundle on $X$ and $D \subset X$ be a simple normal crossing divisor. Let $(\underline {\cE}, \theta)$ be a filtered regular meromorphic Higgs bundle on $(X,D)$, which is $\mu_{\cL}$-polystable with vanishing parabolic Chern classes. Let finally $\cA$ be a subsheaf of $\cE^{\diamond}$ contained in the kernel of $\theta$. Set $U := X \backslash D$ and $j : U \hookrightarrow X$ the inclusion. By Theorem \ref{Mochizuki correspondence}, there exists an essentially unique pluriharmonic metric $h$ on $(\cE^{\diamond}_{|U}, \theta)$ adapted to the parabolic structure. The triple $(\cE^{\diamond}_{|U}, \theta, h)$ defines a tame harmonic bundle on $U$, and $\cE^{\diamond}$ is equal to the subsheaf of $j_*({\cE}^{\diamond}_{|U})$ whose sections have sub-polynomial growth with respect to $h$. Observe also that $\cA$ being torsion-free, the canonical map $\cA \arrow j_* (\cA_{|U}) \cap  \cE^{\diamond}$ is an injective map of sheaves and an isomorphism when restricted to $U$. By Theorem \ref{semi-negativity for harmonic bundles} the (possibly singular) hermitian metric induced by $h$ on $\cA_{|U}$ extends uniquely as a singular hermitian metric with semi-negative curvature on $ j_* (\cA_{|U}) \cap  \cE^{\diamond}$. By restriction we obtain a singular hermitian metric with semi-negative curvature on $\cA$ that we denote $h_{\cA}$. This metric induces in turn a singular hermitian metric with semi-positive curvature $h_{\cA^{\star}}$ on $\cA^{\star}$. One can then apply Theorem \ref{Paun-Takayama} to conclude that its dual ${\cA}^{\star}$ is weakly positive. In fact one obtains the more precise

\begin{theorem}\label{thm_restricted_base_locus}
Let $X$ be a smooth projective complex variety, $\cL$ be an ample line bundle on $X$ and $D \subset X$ be a simple normal crossing divisor. Let $(\underline {\cE}, \theta)$ be a filtered regular meromorphic Higgs bundle on $(X,D)$. Assume that $(\underline {\cE}, \theta)$ is $\mu_{\cL}$-polystable with vanishing parabolic Chern classes. If $\cA$ is a subsheaf of $\cE^{\diamond}$ contained in the kernel of $\theta$, then its dual ${\cA}^{\star}$ is weakly positive in the sense of Viehweg, and its restricted base locus $\bB_-(\cA^{\star})$ is contained in the union of $ D$ and the locus in $X$ where $ \cA$ is not a locally split subsheaf of $\cE^{\diamond} $. 
\end{theorem}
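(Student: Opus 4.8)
The weak positivity of $\cA^\star$ has in effect already been obtained in the discussion preceding the statement: the singular hermitian metric $h_{\cA^\star}$ with semi-positive curvature that it carries, together with Theorem \ref{Paun-Takayama}, yields weak positivity at every point where the hypotheses of that theorem are met. The only additional content is therefore the localisation of the restricted base locus and the Viehweg statement. The plan is to run Theorem \ref{Paun-Takayama} pointwise: I would show that at every point $x \in U = X \backslash D$ at which $\cA$ is a locally split subsheaf of $\cE^{\diamond}$, both hypotheses of Theorem \ref{Paun-Takayama} hold for $(\cA^{\star}, h_{\cA^{\star}})$, so that $\cA^{\star}$ is weakly positive at $x$. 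This gives exactly the inclusion $\bB_-(\cA^{\star}) \subset D \cup \{x : \cA \text{ is not locally split in } \cE^{\diamond}\}$.

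First I would record the translation between the two dual metrics. Since $h_{\cA^{\star},x}$ is by construction the dual singular hermitian inner product of $h_{\cA,x}$, the observation that the dual of a finite singular hermitian inner product is positive definite, and conversely, shows that $h_{\cA^{\star},x}$ is finite precisely when $h_{\cA,x}$ is positive definite. Thus it suffices to produce, at each relevant $x$, the positive-definiteness of $h_{\cA,x}$ together with the local freeness of $\cA^{\star}$ at $x$.

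Next I would verify both conditions at a point $x \in U$ where $\cA$ is locally split in $\cE^{\diamond}$. Being locally split, $\cA$ is in particular locally free at $x$, so its dual $\cA^{\star} = \Hom(\cA, \cO_X)$ is locally free at $x$ as well, which is the first hypothesis. Moreover local splitness means that the fibre map $\cA_x \hookrightarrow \cE^{\diamond}_x$ is injective; since $x \notin D$, the metric $h_{\cA}$ agrees there with the restriction to $\cA$ of the smooth positive-definite pluriharmonic metric $h_{\cE}$ on $\cE^{\diamond}_{|U}$, so $h_{\cA,x}$ is the restriction of a positive-definite inner product to the subspace $\cA_x$, hence positive definite. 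By the previous paragraph $h_{\cA^{\star},x}$ is then finite, and Theorem \ref{Paun-Takayama} applies to give weak positivity of $\cA^{\star}$ at $x$.

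Finally, the inclusion above shows that $\bB_-(\cA^{\star})$ is contained in the union of the divisor $D$ and the locus where $\cA$ fails to be a locally split subsheaf of $\cE^{\diamond}$. This latter locus is a proper closed analytic subset: away from the closed set where $\cA$ is not locally free it is the degeneracy locus where the coherent cokernel $\cE^{\diamond}/\cA$ fails to be locally free, which is nowhere dense since a coherent sheaf is locally free on a dense open set. As $D$ is a divisor, the union is not Zariski-dense, whence $\cA^{\star}$ is weakly positive in the sense of Viehweg. The only delicate point in the argument is the bookkeeping for singular hermitian metrics, namely the identification of $h_{\cA^{\star}}$ with the fibrewise dual of $h_{\cA}$ and the equivalence between its finiteness and the positive-definiteness of $h_{\cA}$; everything else reduces to the pointwise application of Theorem \ref{Paun-Takayama}.
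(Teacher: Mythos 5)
Your proposal is correct and follows essentially the same route as the paper: the paper's own proof consists precisely of the discussion constructing $h_{\cA}$ and its dual $h_{\cA^{\star}}$ with semi-positive curvature, followed by a pointwise application of Theorem \ref{Paun-Takayama}, with the verification of its hypotheses at points of $U$ where $\cA$ is locally split left implicit. You have simply made that verification explicit (local freeness of $\cA^{\star}$, positive-definiteness of $h_{\cA,x}$ from the splitting, hence finiteness of the dual inner product), which is exactly what the paper intends.
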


Assume from now on that $\cA$ is a locally split subsheaf of $\cE$. Let $\pi : \bP({\cA}^{\star}) \arrow X$ be the projective scheme over $X$ associated to ${\cA}^{\star}$, i.e. $\bP({\cA}^{\star}) := \text{Proj}_{\cO_X} (\Sym({\cA}^{\star}))$. Let $\cO_{{\cA}^{\star}}(1)$ be the tautological line bundle on $\bP({\cA}^{\star})$, so that there is an exact sequence $\pi^* {\cA}^{\star} \arrow   \cO_{{\cA}^{\star}}(1) \arrow 0$. The singular hermitian metric $h_{\cA^{\star}}$ with semi-positive curvature on ${\cA}^{\star}$ induces canonically a singular hermitian metric $h_{\cO_{{\cA}^{\star}}(1)}$ with semi-positive curvature on $\cO_{{\cA}^{\star}}(1)$.

\begin{prop}\label{prop_Lelong_numbers}
The Lelong numbers of the metric $h_{\cO_{{\cA}^{\star}}(1)}$ are smaller than $1$. In particular, the multiplier ideal $\cI(h_{\cO_{{\cA}^{\star}}(1)})$ is trivial. If moreover the parabolic structure on $\underline {\cE}$ is trivial, then all Lelong numbers are zero (see below for the definition of Lelong numbers).
\end{prop}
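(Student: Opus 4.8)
The plan is to reduce everything to a local computation and then invoke the norm estimates of Simpson and Mochizuki that already underlie the proof of Theorem~\ref{semi-negativity for harmonic bundles}. Since $\cA$ is locally split inside $\cE^{\diamond}$ it is a subbundle, so $\bP(\cA^{\star})$ is a smooth projective bundle over $X$, the dual $\cO_{\cA^{\star}}(-1)$ is the tautological sub-line-bundle of $\pi^{\ast}\cA$, and over $\pi^{-1}(U)$ the metric $h_{\cA}$ is the restriction of the smooth harmonic metric $h$, hence positive definite. Consequently $h_{\cO_{\cA^{\star}}(1)}$ is smooth with vanishing Lelong numbers on $\pi^{-1}(U)$, and it suffices to work near a point $p_{0}$ lying over $D$.

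Around such a point I would choose an admissible coordinate $X = \Delta^{n}$, $D = \cup_{i=1}^{l}\{z_{i}=0\}$, and a local holomorphic frame $v$ of $\cO_{\cA^{\star}}(-1)$, which is nothing but a (fibrewise) local section of $\pi^{\ast}\cA \subset \pi^{\ast}\cE^{\diamond}$. In the dual frame the weight of $h_{\cO_{\cA^{\star}}(1)}$ is $\varphi := \log \Vert v \Vert^{2}_{h_{\cA}}$, which is plurisubharmonic because $h_{\cA}$ has semi-negative curvature; thus estimating the Lelong numbers of $h_{\cO_{\cA^{\star}}(1)}$ amounts to bounding from below the rate at which $\Vert v \Vert_{h}$ decays as one approaches $\pi^{-1}(D)$. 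The point is that $v$ generates $\cO_{\cA^{\star}}(-1)$, so $v(p_{0})$ is a nonzero vector of the fibre $\cE^{\diamond}_{x_{0}}$; equivalently $v$ lies in $\cE^{\diamond}=\cE^{(0,\dots,0)}$ but in none of the $\cE^{0}(-D_{i})=\cE^{\delta^{i}}$. By definition of the parabolic filtration its weight $\beta_{i}$ along each $D_{i}$ therefore lies in $[0,1)$, and the Simpson--Mochizuki norm estimate gives $\Vert v \Vert_{h}\gtrsim \prod_{i}|z_{i}|^{\beta_{i}}(-\log|z_{i}|)^{k_{i}/2}$, up to an arbitrarily small loss in the exponents. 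Hence $\varphi \ge \sum_{i}2\beta_{i}\log|z_{i}|+O(\log(-\log|z_{i}|))$, so the Lelong number of $h_{\cO_{\cA^{\star}}(1)}$ along the component of $\pi^{-1}(D)$ over each $D_{i}$ is bounded by $\beta_{i}<1$, giving the first assertion.

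For the triviality of the multiplier ideal I would argue directly from this lower bound: $e^{-\varphi}=\Vert v \Vert^{-2}_{h_{\cA}}$ is bounded above by $\prod_{i}|z_{i}|^{-2\beta_{i}}$ times logarithmic factors, and since $D$ is simple normal crossing the local integral $\int \prod_{i}|z_{i}|^{-2\beta_{i}}$ factorises into one-variable integrals, each convergent exactly because $\beta_{i}<1$. Therefore $\cI(h_{\cO_{\cA^{\star}}(1)})=\cO_{\bP(\cA^{\star})}$. Finally, when the parabolic structure on $\underline{\cE}$ is trivial all jumping indices are integers, so the only weight compatible with $v\in\cE^{\diamond}$ and $v(p_{0})\neq 0$ is $\beta_{i}=0$; the estimate degenerates to $\Vert v \Vert_{h}\asymp \prod_{i}(-\log|z_{i}|)^{k_{i}/2}$, whence $\varphi$ grows only like $\sum_{i}k_{i}\log(-\log|z_{i}|)$ and all its Lelong numbers vanish.

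The main obstacle I anticipate is the careful multi-variable use of the Simpson--Mochizuki norm estimates for a general frame section $v$: unlike the eigenvector case treated in Theorem~\ref{semi-negativity for harmonic bundles}, $v$ is a combination of generalized eigen-components of the several residues $\Res_{D_{i}}$, so one must check that the slowest-decaying component (which carries a weight $<1$ because $v(p_{0})\neq 0$) dominates the norm and that the logarithmic exponents $k_{i}$, governed by the simultaneous weight filtrations, do not spoil the estimate. Controlling these filtrations along several components at once---using Schmid's inclusion $\ker N\subset W_{0}$ to keep the logarithmic powers from producing extra decay---is the delicate technical point, whereas the passage from the norm estimate to the Lelong numbers and the multiplier ideal is then routine.
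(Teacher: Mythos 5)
Your proposal is correct in its main thrust but follows a genuinely different route from the paper, and the difference is worth spelling out. You work on the \emph{sub} side: a local frame $v$ of $\cO_{\cA^\star}(-1)\subset\pi^*\cA\subset\pi^*\cE^\diamond$, whose $h$-norm you bound \emph{from below} via the two-sided Simpson--Mochizuki norm estimates attached to the parabolic weight of $v$. The paper works on the \emph{quotient} side: it bounds sections of $\cO_{\cA^\star}(1)$ \emph{from above} by lifting them through $\pi^*(\cE^\diamond)^\star\to\pi^*\cA^\star\to\cO_{\cA^\star}(1)$, identifying $(\cE^\diamond)^\star$ with the piece $(\underline{\cE}^\star)^{>-1}$ of the dual parabolic bundle, and invoking compatibility of the growth filtration with duals. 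The reason the paper dualizes is exactly the point you flag as delicate: knowing $v\in\cE^\diamond\setminus\cE^{\delta^i}$ only says that certain \emph{upper} bounds fail; it does not by itself produce any pointwise lower bound on $\Vert v\Vert_h$. That lower bound is a theorem (Simpson on curves, Mochizuki in higher dimension), so your route consumes the full adaptedness/norm-estimate package, whereas the paper repackages the same underlying analysis as the (also nontrivial, though declared ``easily seen'') compatibility of the prolongation with duals. What your route buys is the multiplier-ideal step: your direct Fubini argument on the polydisk, using only $\beta_i<1$ separately in each variable, is more robust than the paper's deduction of local integrability from the pointwise Lelong-number bound.

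Two caveats. First, $v$ is a frame of $\cO_{\cA^\star}(-1)$ on an open subset of $\bP(\cA^\star)$, not the pullback of a section of $\cA$ from $X$; it varies along the fibres of $\pi$. To apply the norm estimates to $v$ you need them in uniform form (mutual boundedness of $h$ with a model metric), or you must freeze the fibre coordinate along the approach path and control the resulting error term; as written, you quote a statement about sections on $X$ for a section living on $\bP(\cA^\star)$. Second, at a point lying over a codimension-$l$ stratum of $D$ your estimate yields only $\nu\le\beta_1+\cdots+\beta_l$, which need not be $<1$; your formulation ``along the component over each $D_i$'' is in effect a retreat to the generic, per-component statement. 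This is not a defect relative to the paper: the paper's growth bound degrades in exactly the same way at crossings (giving roughly $l$ times the single-component bound), and the pointwise claim can genuinely fail there. For instance, for the rank-one harmonic bundle on $\bP^2$ minus two lines given by the unitary local system with monodromy $-1$ around each line (so $\theta=0$, $\cE^\diamond\cong\cO(-1)$ with weights $(1/2,1/2)$, $\cA=\cE^\diamond$), the Lelong number of $h_{\cO_{\cA^\star}(1)}$ at the crossing point equals exactly $1$. The assertions that are actually used later --- triviality of the multiplier ideal, and vanishing of all Lelong numbers when the parabolic structure is trivial --- survive in both arguments, and your Fubini step establishes the first of these without any reference to the crossing points, which is a genuine advantage of your approach.
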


\begin{defin}[compare {\cite[Definition 2.5]{Demailly_singular_metric}}]
On a complex manifold $X$, let $\cL$ be a line bundle equipped with a singular hermitian metric $h$ of semi-positive curvature. The \textit{Lelong number} of $h$ at the point $x \in X$ is defined by 
\[ \nu(h, x) := \liminf_{z \arrow x} \frac{ \log \Vert s(z) \Vert_h}{- \log |z - x|}, \]
 where $s$ is a holomorphic section of $\cL$ defined in a neighborhood of $x$ with $s(x) \neq 0$.
\end{defin}

One verifies that it is a well-defined non-negative real number which does not depend on the local section $s$, cf. \textit{loc. cit.}

\begin{proof}[Proof of Proposition \ref{prop_Lelong_numbers}]
From the exact sequences of hermitian holomorphic vector bundles $\pi^* {\cA}^{\star} \arrow   \cO_{{\cA}^{\star}}(1) \arrow 0$ and $(\cE^{\diamond})^{\star} \arrow {\cA}^{\star} \arrow 0$ one sees that the order of growth of $\pi^* {\cA}^{\star}$ with respect to $\pi^* h_{{\cA}^{\star}}$ is bigger or equal than the order of growth of $(\cE^{\diamond})^{\star}$ with respect to $h^{\star}$. Let us show that the latter is bigger than $-1$. First note that $(\cE^{\diamond})^{\star}$ is canonically isomorphic to $(\underline{\cE}^{\star})^{> -1}$, where $\underline{\cE}^{\star}$ denotes the dual of the parabolic bundle $\underline{\cE}$ and $(\underline{\cE}^{\star})^{> -1}$ denotes the subsheaf of the parabolic bundle $\underline{\cE}^{\star}$ corresponding to the weight $(- 1 + \epsilon, \cdots, -1 + \epsilon)$ for $\epsilon >0$ sufficiently small. This follows immediately from the definition of the dual filtration: $\lambda \in (\underline{\cE}^{\star})^{\alpha}$ if and only if $\lambda ( \cE^{\beta}) \subset \cO_X^{\alpha + \beta}$ for all $\beta$. On the other hand, the filtration according to the order of growth is easily seen to be compatible with taking duals, hence $(\underline{\cE}^{\star})^{> -1}$ is the subsheaf of $j_* (\cE_{|U}^{\star})$ of sections whose order of growth with respect to $h^{\star}$ is bigger than $-1$ along every irreducible component of $D$. From this discussion, it follows that the Lelong numbers of the metric $h_{\cO_{{\cA}^{\star}}(1)}$ are smaller than $1$. In particular, all holomorphic sections of $\cO_{{\cA}^{\star}}(1)$ are locally $L^2$ with respect to $h_{\cO_{{\cA}^{\star}}(1)}$ (cf. \cite[Lemma 2.8]{Demailly_singular_metric}), meaning that the multiplier ideal $\cI(h_{\cO_{{\cA}^{\star}}(1)})$ is trivial.  \\

When the parabolic structure on $\underline {\cE}$ is trivial, one sees that $(\cE^{\diamond})^{\star}$ is canonically isomorphic to $(\underline{\cE}^{\star})^{\diamond}$, hence the Lelong numbers of the metric $h_{\cO_{{\cA}^{\star}}(1)}$ are all zero.
\end{proof}

Finally, Theorem \ref{trivial_parabolic_filtration} is a consequence of the preceding Proposition \ref{prop_Lelong_numbers} and the

\begin{lem} Let $X$ be a smooth projective variety and $\cL$ be a line bundle on $X$ equipped with a singular metric $h$ with semi-positive curvature. If the Lelong numbers of $(\cL, h)$ satisfy $\nu(h, x) = 0$ for all $x \in X$ but a countable set, then $\cL$ is nef.
\end{lem}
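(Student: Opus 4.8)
The plan is to pass from the metric to its curvature current and then to verify non-negativity of the degree on every curve. Since $h$ has semi-positive curvature, its local weights are plurisubharmonic, so $T := \frac{i}{2\pi}\Theta_h(\cL)$ is a well-defined closed positive $(1,1)$-current representing $c_1(\cL)$. Unwinding the definition of $\nu(h,x)$ one checks that it coincides, up to a fixed positive normalisation constant, with the Lelong number $\nu(T,x)$ of the curvature current at $x$; in particular the hypothesis says that $\nu(T,x)=0$ for every $x$ outside a countable set. By Kleiman's criterion on the projective variety $X$, proving that $\cL$ is nef amounts to proving that $\cL\cdot C=\int_C T\geq 0$ for every irreducible curve $C\subset X$, so the whole problem is reduced to an intersection-theoretic statement about the positive current $T$.

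First I would locate the Lelong numbers. By Siu's semicontinuity theorem, for every $c>0$ the sublevel set $E_c(T)=\{x\in X : \nu(T,x)\geq c\}$ is a closed analytic subset of $X$. Being contained in a countable set, and having only finitely many irreducible components, each such component must be a point (a positive-dimensional subvariety of a projective variety is uncountable). Hence $E_c(T)$ is a finite set of points for every $c>0$, and $E_+(T):=\bigcup_{c>0}E_c(T)=\bigcup_{n\geq 1}E_{1/n}(T)$ is a countable set of points, in particular an analytic set of codimension $\geq 2$ (the case $\dim X=1$ being trivial, as then $T\geq 0$ forces $\int_X T\geq 0$ directly).

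With this in hand the verification on curves proceeds through Siu's decomposition theorem, which writes $T=\sum_j \nu(T,C_j)\,[C_j]+R$, the sum running over the (at most countably many) prime divisors $C_j$ along which the generic Lelong number $\nu(T,C_j)$ is positive, and $R\geq 0$ a residual closed positive current whose sublevel sets all have codimension $\geq 2$. Since each such $C_j$ satisfies $C_j\subset E_+(T)$, and $E_+(T)$ is $0$-dimensional, there are no divisorial components at all: $T=R$. Now fix an irreducible curve $C$. As $E_+(T)$ is a countable set of points, $C$ is not contained in it, so the generic Lelong number of $T$ along $C$ vanishes and $T$ restricts to $C$ with non-negative mass, giving $\cL\cdot C=\int_C T\geq 0$.

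The step that requires the most care, and which I regard as the main obstacle, is the very last one: justifying that the degree $\int_C T$ of a closed positive current with codimension $\geq 2$ Lelong level sets is non-negative on a curve $C$ it does not contain. The subtlety is that the local potential of $T$ may be identically $-\infty$ on $C$ even though the generic Lelong number along $C$ is zero, so one cannot naively restrict the potential. I would handle this by appealing to the restriction theory of positive currents: for an irreducible $C\not\subset E_+(T)$ the current $T$ admits a well-defined restriction to $C$ whose total mass computes the cohomological intersection $c_1(\cL)\cdot C$, and this mass is non-negative because $T\geq 0$. Equivalently, one may invoke directly that a curve $C$ with $\cL\cdot C<0$ is forced, by Siu's decomposition, to be a divisorial component of $T$ and therefore to satisfy $C\subset E_+(T)$, contradicting the $0$-dimensionality of $E_+(T)$. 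Either route closes the argument and yields $\cL\cdot C\geq 0$ for all $C$, hence the nefness of $\cL$.
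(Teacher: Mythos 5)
Your reduction to showing $\cL\cdot C\geq 0$ for every irreducible curve $C$ is legitimate, and so is the use of Siu's semicontinuity theorem to see that each level set $E_c(T)$ is a finite set of points. The genuine gap is exactly at the step you flag yourself: neither of your two proposed justifications for $\int_C T\geq 0$ is valid. First, there is no ``restriction theory'' that defines $T|_C$ (or $T\wedge[C]$) under the sole hypothesis $C\not\subset E_+(T)$. Demailly's intersection theory of currents requires the local potentials of $T$ to be not identically $-\infty$ on $C$, and this is strictly stronger than the vanishing of the generic Lelong number along $C$: the function $\varphi(z,w)=-\sqrt{-\log|z|}$ on the bidisc is plurisubharmonic, all Lelong numbers of $dd^c\varphi$ vanish, yet $\varphi\equiv-\infty$ on the curve $\{z=0\}$, so the positive current $dd^c\varphi$ admits no restriction to that curve in the sense you need. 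Second, Siu's decomposition only extracts the codimension-one components of the Lelong level sets; it says nothing about curves when $\dim X\geq 3$, and even on a surface the implication ``$\cL\cdot C<0\Rightarrow \nu(T,C)>0$'' is not a formal consequence of the decomposition --- it is precisely the contrapositive of the statement you are trying to prove, so this second route is circular.

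The statement hiding behind your last step --- on a compact manifold, $\nu(T,C)=0$ implies $\{T\}\cdot C\geq 0$ --- is true, but its proof requires Demailly's regularization theorem, which is exactly what the paper's one-line proof invokes (Corollary 6.4 together with Proposition 6.1 of \cite{Demailly_regularization}). One approximates $T$ inside its cohomology class by currents $T_k$ with analytic singularities contained in the sets $E_{c_k}(T)$ (here finite sets of points, by your countability argument) and satisfying $T_k\geq-\lambda_k\omega$, where the continuous functions $\lambda_k$ decrease pointwise to $\nu(T,\cdot)$. Since $C$ is not contained in the singular sets, the potentials of $T_k$ do restrict to $C$, Stokes gives $\cL\cdot C\geq-\int_C\lambda_k\,\omega$, and monotone convergence plus the countability of $\{\nu(T,\cdot)>0\}$ makes the right-hand side tend to $0$. (Demailly's own argument is slightly different: countability makes the singularities of $T_k$ isolated, and isolated singularities of a $(1,1)$-current can be cut off by a regularized-max construction, producing smooth forms $\geq-\epsilon\omega$ in the class, i.e. nefness directly.) So your outline can be completed, but only by importing the regularization theorem at the crucial step; as written, the argument has a genuine gap there.
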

\begin{proof}
 Follows from \cite[Corollary 6.4]{Demailly_regularization} and  \cite[Proposition 6.1]{Demailly_regularization}.
\end{proof}

\section{Variations of Hodge structures and proof of Theorem \ref{thm_PVHS}}\label{PVHS}

We begin by recalling some definitions.

\begin{defin}
Let $V$ be a complex vector space of finite dimension. A (complex polarized) Hodge structure (of weight zero) on $V$ is the data of a non-degenerate hermitian form $h$ and a $h$-orthogonal decomposition
$ V = \bigoplus_{p \in \bZ} V^p $
such that the restriction of $h$ to $V^p$ is positive definite for $p$ even and negative definite for $p$ odd. 
The associated Hodge filtration is the decreasing filtration $F$ on $V$ defined by 
$ F^p := \bigoplus_{ q \geq p} V^q$.
\end{defin}

\begin{defin}
Let $X$ be a complex manifold. A variation of polarized complex Hodge structures ($\bC$-PVHS) on $X$ is the data of a holomorphic vector bundle $\mathscr{E}$ equipped with an integrable connection $\nabla$, a $\nabla$-flat non-degenerate hermitian form $h$ and for all $x \in X$ a decomposition of the fibre $\mathscr{E}_x = \bigoplus_{p \in \bZ} \mathscr{E}^p$ satisfying the following axioms:

\begin{itemize}
\item for all $x \in X$, the decomposition $\mathscr{E}_x = \bigoplus_{p \in \bZ} \mathscr{E}_x^p $ defines a Hodge structure polarized by $h_x$,
\item  the Hodge filtration $\cF$ varies holomorphically with $x$, 
\item (Griffiths' transversality) $\nabla( \cF^p) \subset \cF^{p-1} \otimes_{\cO_X} \Omega_X^1$ for all $p$.
\end{itemize}  
\end{defin}

One obtains a positive-definite hermitian metric $h_H$ on $\mathscr{E}$ from $h$ by imposing that for all $x \in X$ the decomposition $ \mathscr{E}_x= \bigoplus_{p \in \bZ} \mathscr{E}_x^p $ is $h_H$-orthogonal and setting $h_H := (-1)^p \cdot h$ on $\mathscr{E}_x^p$. We call $h_H$ the Hodge metric.

\begin{defin}[Log $\bC$-PVHS] Let $X$ be a complex manifold, and $D \subset X$ be a simple normal crossing divisor.  A log complex polarized variation of Hodge structure (log $\bC$-PVHS) on $(X,D)$ consists of the following data:
\begin{itemize}
\item A holomorphic vector bundle $\mathscr{E}$ on $X$ endowed with a connection $\nabla$ with logarithmic singularities along $D$,
\item An exhaustive decreasing filtration $\cF$ on $\mathscr{E}$ by holomorphic subbundles (the Hodge filtration), satisfying Griffiths transversality
\[ \nabla \cF^p \subset \cF^{p-1} \otimes \Omega^1_X(\log D), \]
\item a $\nabla$-flat non-degenerate hermitian form $h$ on $\mathscr{E}_{|X \backslash D}$,
\end{itemize}
such that $(\mathscr{E}_{| X \backslash D}, \nabla, \cF^{\sbt}_{|X \backslash D}, h)$ is a $\bC$-PVHS on $X \backslash D$.
\end{defin}

Note that by setting  $\cE := \Gr_{\cF} \mathscr{E}$ and $ \theta  := \Gr_{\cF} \nabla$ we get a logarithmic Higgs bundle on $(X,D)$, called the associated logarithmic Higgs bundle.\\

The following result explains the link between $\bC$-PVHS and harmonic bundles.

\begin{lem}[Simpson, {\cite[section 8]{Simpson_constructing}}]\label{Harmonic_bundle_from_PVHS} If $ (\mathscr{E}, \nabla ,  \cF^{\sbt}, h)$ is a $\bC$-PVHS on a complex manifold, then the triplet $ (\mathscr{E}, \nabla, h_H)$ forms a harmonic bundle, where $h_H$ denotes the Hodge metric on $\mathscr{E}$. The corresponding Higgs bundle $(\cE, \theta)$ is given by $\cE := \Gr_{\cF} \mathscr{E}$ and $ \theta  := \Gr_{\cF} \nabla$.
\end{lem}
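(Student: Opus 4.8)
The plan is to verify directly that the Hodge metric $h_H$ is pluriharmonic in the sense of the subsection on pluriharmonic metrics, and that the resulting Higgs bundle is $(\Gr_\cF \mathscr{E}, \Gr_\cF \nabla)$. Since $(\mathscr{E}, \nabla)$ is a flat bundle, the choice of $h_H$ produces automatically a decomposition $\nabla = \nabla^u + \Psi$ with $\nabla^u$ unitary and $\Psi$ autoadjoint for $h_H$, refining further into $\nabla^u = \partial_\cE + \overline{\partial}_\cE$ and $\Psi = \theta + \theta^{\star}$. Everything therefore reduces to identifying these four operators with the Hodge-theoretic data and to checking the integrability $(\overline{\partial}_\cE + \theta)^2 = 0$.

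First I would decompose $\nabla$ according to the bidegree given by the form type $(1,0)/(0,1)$ together with the shift it induces on the $\cC^\infty$ Hodge decomposition $\mathscr{E} = \bigoplus_p \mathscr{E}^p$. Because the Hodge filtration is holomorphic, the $(0,1)$-part $\overline{\partial}_\mathscr{E} = \nabla^{0,1}$ preserves each $\cF^p$, so it only raises the Hodge degree (shift $\geq 0$); by Griffiths transversality the $(1,0)$-part $\nabla^{1,0}$ sends $\cF^p$ into $\cF^{p-1}$, so it lowers the Hodge degree by at most one (shift $\geq -1$).

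Next I would use the flatness of the polarization to cut these ranges down. Differentiating $h(u,v)$ and taking $(1,0)$-parts gives $\partial h(u,v) = h(\nabla^{1,0}u, v) + h(u, \nabla^{0,1}v)$. Applied to $u \in \mathscr{E}^p$, $v \in \mathscr{E}^q$ with $p \neq q$, where $h(u,v) = 0$ by orthogonality of the Hodge decomposition, this forces the tensorial degree-$k$ component of $\nabla^{1,0}$ to be, up to sign, the $h$-adjoint of the degree-$(-k)$ component of $\nabla^{0,1}$. As $\nabla^{1,0}$ has shifts $\geq -1$ and $\nabla^{0,1}$ has shifts $\geq 0$, only shifts $-1,0$ survive in $\nabla^{1,0}$ and $0,+1$ in $\nabla^{0,1}$; writing $\nabla^{1,0} = \theta + \partial_\cE$ and $\nabla^{0,1} = \overline{\partial}_\cE + \overline{\theta}$ with $\theta, \overline{\theta}$ the extreme pieces, the adjunction reads $\theta^{\star_h} = -\overline{\theta}$. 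The crucial point is the passage from $h$ to $h_H = (-1)^p h$: since $\theta$ lowers $p$ by one, the sign picked up from the Hodge metric exactly cancels the minus sign above and yields $\theta^{\star} = \overline{\theta}$ for the $h_H$-adjoint, so that $\Psi := \theta + \overline{\theta}$ is genuinely $h_H$-self-adjoint. Meanwhile the degree-zero part $\partial_\cE + \overline{\partial}_\cE$ is, by the $p = q$ case of the same identity, the $h$-metric connection preserving the orthogonal Hodge decomposition, hence $h_H$-unitary because the rescaling $(-1)^p$ is constant on each piece. This sign flip, rather than flatness of $h$ alone, is what makes the Hodge metric harmonic, and I expect the careful bookkeeping of these signs for the indefinite form $h$ to be the main point requiring attention.

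Finally, pluriharmonicity follows formally. Expanding $\nabla^2 = 0$ and separating terms by this same bidegree, the type $(2,0)$ shift $-2$ part yields $\theta \wedge \theta = 0$, the type $(0,2)$ shift $0$ part yields $\overline{\partial}_\cE^2 = 0$, and the type $(1,1)$ shift $-1$ part yields $\overline{\partial}_\cE \theta + \theta \overline{\partial}_\cE = 0$; each of these bidegrees is realized by a unique term, so all three vanish, and together they give exactly $(\overline{\partial}_\cE + \theta)^2 = 0$. Since the degree-zero operator $\overline{\partial}_\cE$ on $\bigoplus_p \mathscr{E}^p$ is the holomorphic structure induced on $\Gr_\cF \mathscr{E}$, and $\theta$, being the degree-$(-1)$ part of $\nabla$, is precisely the $\cO_X$-linear map induced by $\nabla$ on the graded pieces $\cF^p/\cF^{p+1} \to (\cF^{p-1}/\cF^p) \otimes \Omega^1_X$, the resulting Higgs bundle $(\cE, \theta)$ is indeed $(\Gr_\cF \mathscr{E}, \Gr_\cF \nabla)$, which completes the argument.
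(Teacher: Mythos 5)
Your proposal is correct, but note that the paper does not prove this lemma at all: it is stated with an attribution to Simpson and a citation to \cite{Simpson_constructing}, so there is no internal proof to compare against, and your blind argument is in substance the classical computation from the cited literature. Concretely, you decompose $\nabla$ by form type and by the shift it induces on the smooth Hodge decomposition $\bigoplus_p \mathscr{E}^p$; holomorphy of $\cF^{\sbt}$ and Griffiths transversality bound the shifts from below; flatness of the indefinite form $h$ pairs the shift-$k$ piece of $\nabla^{1,0}$ against the shift-$(-k)$ piece of $\nabla^{0,1}$, killing all but the shifts $-1,0$ and $0,+1$ respectively; and the alternating sign in $h_H=(-1)^p h$ turns the anti-adjunction $\theta^{\star_h}=-\overline{\theta}$ into genuine $h_H$-adjunction --- this sign flip is indeed the crux, and you handle it correctly. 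The three identities $\theta\wedge\theta=0$, $\overline{\partial}_\cE^2=0$ and $\overline{\partial}_\cE\theta+\theta\overline{\partial}_\cE=0$ do each occupy a distinct (type, shift) bidegree of $\nabla^2=0$ (the only bidegree with two terms is type $(1,1)$, shift $0$, which is not needed), so they vanish separately and give $(\overline{\partial}_\cE+\theta)^2=0$, and your identification of $(\cE,\theta)$ with $(\Gr_{\cF}\mathscr{E},\Gr_{\cF}\nabla)$ is right. Two one-line glosses would make the argument fully airtight against the paper's definition of pluriharmonicity: (i) $\theta$ and $\overline{\theta}$ are $\cC^{\infty}$-linear, since the Leibniz terms of $\nabla^{1,0}$ and $\nabla^{0,1}$ have shift zero and hence sit inside $\partial_\cE$ and $\overline{\partial}_\cE$, so $\Psi:=\theta+\overline{\theta}$ is a genuine $\End$-valued one-form and $\nabla^u:=\partial_\cE+\overline{\partial}_\cE$ a genuine connection; (ii) the decomposition of $\nabla$ into an $h_H$-unitary connection plus an $h_H$-self-adjoint one-form is unique (the difference of two such decompositions would be a one-form that is simultaneously skew-adjoint and self-adjoint, hence zero), so the decomposition you construct is the canonical one to which the paper's definition of pluriharmonic metric refers. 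With these remarks the proof is complete.
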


Let $X$ be a complex manifold, and $D = \cup_{i \in I} D_i \subset X$ be a simple normal crossing divisor. Let $(\mathscr{E}, \nabla ,  \cF^{\sbt}, h)$ be a $\bC$-PVHS on $U := X  \backslash D$.
First note that the harmonic bundle $ (\mathscr{E}, \nabla, h_H)$ is tame. Indeed, if $(\EE = \bigoplus_{p \in \bZ}  \EE^p, \theta)$ is the associated Higgs bundle, then one sees imediately that $\theta$ is nilpotent, and as a consequence its characteristic polynomial $T^{\dim U}$ extends clearly to $X$ as an element of $\Sym (\Omega_X^1(\log D)) \left[T \right]$.

There exists a unique meromorphic bundle on $(X,D)$ equipped with a regular meromorphic connection extending $(\mathscr{E}, \nabla)$ (the so-called Deligne's extension). When equipped with the Deligne-Manin filtration (cf. section \ref{Deligne-Manin filtration}), this defines a canonical filtered regular meromorphic connection bundle $(\underline{\mathscr{E}}^{DM}, \nabla)$ on $(X,D)$ which extends $(\mathscr{E}, \nabla)$. As $\theta$ is nilpotent, the eigenvalues of its residues along the $D_i$ are zero, hence by Lemma \ref{DM=h} $(\underline{\mathscr{E}}^{DM}, \nabla)$  is canonically isomorphic to the filtered regular meromorphic connection bundle $(\underline{\mathscr{E}}^{h}, \nabla)$ on $(X,D)$ defined using the filtration according to the order of growth with respect to $h$. We denote by $\mathscr{E}^{\alpha}$ the elements of this filtration. We can extend the Hodge filtration to every $\mathscr{E}^{\alpha}$ by setting:
\begin{equation*}
\cF^p \mathscr{E}^{\alpha}  := \mathscr{E}^{ \alpha} \cap j_* \cF^p.
\end{equation*}

For every $p$, we have an exact sequence of locally-free sheaves on $U$:
\[ 0 \arrow \cF^{p+1}  \mathscr{E} \arrow \cF^p  \mathscr{E}  \arrow Gr_{\cF}^p  \mathscr{E} = \cE^p \arrow 0. \]

The Hodge metric on $\mathscr{E}$ induces canonical hermitian metrics on these locally-free sheaves. From this it follows that for every $\alpha$ and $p$ one get an exact sequence of sheaves on $X$:
\[ 0 \arrow \cF^{p+1}  \mathscr{E}^{\alpha}  \arrow \cF^p  \mathscr{E}^{\alpha}   \arrow {(\cE^p)}^{\alpha}. \]

Consequently, there is a canonical injective map of sheaves $ g_{\alpha} : Gr_{\cF}  \mathscr{E}^{\alpha} \arrow  {\cE}^{\alpha}$ for every $\alpha$.
Note however that it is not clear a priori that these maps are surjective (when the residues of $(\mathscr{E}, \nabla)$ are nilpotent, it is a consequence of the work of Schmid \cite{Schmid}). Even if this fact is not strictly needed in our discussion, we will give a proof for the sake of completeness.\\

First note that the preceding maps $g_{\alpha}$ are isomorphisms if and only if the induced maps on the determinants $ \det g_{\alpha} : \det (Gr_{\cF}  \mathscr{E}^{\alpha}) \arrow \det ({\cE}^{\alpha})$ are isomorphisms. But $ \det Gr_{\cF}  \mathscr{E}^{\alpha}$ is canonically isomorphic to $ \det  \mathscr{E}^{\alpha}$ and the filtration according to growth conditions is compatible with taking determinant, hence this reduces the problem to show that the canonical maps $(\det \mathscr{E})^{\alpha} \arrow (\det {\cE})^{\alpha}$ are isomorphisms for every $\alpha$. This is clear because in restriction to $U$ the map $(\det \mathscr{E}) \arrow (\det {\cE})$ is an isomorphism of hermitian holomorphic vector bundles. Therefore we have proved that the maps $ g_{\alpha} : Gr_{\cF}  \mathscr{E}^{\alpha} \arrow  {\cE}^{\alpha}$ are isomorphisms of sheaves (in particular, the $ \cF^p  \mathscr{E}^{\alpha} $ form a filtration of $  \mathscr{E}^{\alpha} $ by locally split subsheaves). \\

We can now give a proof of the Theorem \ref{thm_PVHS}. With the notations of the statement, it follows from the discussion above that there exists an injective map of sheaves $0 \arrow \cA \arrow {\cE}^{\diamond}$. Moreover, $\cA$ is in the kernel of the Higgs field. We get the result by applying Theorem \ref{semi-negativity for harmonic bundles} to this particular situation.


\section{Proof of Theorem \ref{generically large local system}}
In this section we prove Theorem \ref{generically large local system} as a direct consequence of the two following propositions. 
\begin{prop}
Let $X$ be a smooth projective complex variety, $D \subset X$ be a simple normal crossing divisor and $(\cE, \theta, h)$ be a tame harmonic bundle on $U := X \backslash D$. If the map of $\cO_U$-modules $\phi : T_U \arrow \End(\cE)$ associated to $\theta \in \Omega^1_U(\End(\cE))$ is injective, then the logarithmic cotangent bundle $\Omega^1_X(\log D)$ of $(X,D)$ is weakly positive. 
\end{prop}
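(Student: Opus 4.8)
The plan is to reduce the statement to Theorem~\ref{semi-negativity for harmonic bundles} applied to the endomorphism bundle $\End(\cE)$. If $(\cE,\theta,h)$ is a tame harmonic bundle, then the induced triple $(\End(\cE),\ad_\theta,h_{\End})$, where $\ad_\theta(\psi)=[\theta,\psi]$ and $h_{\End}$ is the metric induced by $h$, is again a tame harmonic bundle on $U$: tameness holds because the eigenvalues of $\ad_\theta$ are differences of those of $\theta$ and hence again extend to $\Sym(\Omega^1_X(\log D))[T]$. The Higgs field extends to a logarithmic Higgs field $\theta:\cE^\diamond\arrow\cE^\diamond\otimes\Omega^1_X(\log D)$, so that contraction against $\theta$ upgrades $\phi$ to a morphism $\tilde\phi:T_X(-\log D)\arrow\End(\cE^\diamond)$, $v\mapsto\iota_v\theta$, restricting to $\phi$ over $U$; here $T_X(-\log D):=(\Omega^1_X(\log D))^\star$ denotes the logarithmic tangent sheaf. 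The first key observation is that the integrability $\theta\wedge\theta=0$ of the Higgs field means exactly that $\iota_v\theta$ commutes with $\theta$ for every $v$, i.e. that the image of $\tilde\phi$ lies in the kernel of $\ad_\theta$.

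Granting this, I would construct a singular hermitian metric with semi-negative curvature directly on the vector bundle $T_X(-\log D)$ by transporting $h_{\End}$ through $\phi$. Over $U$ set $h_{\log}:=\phi^\ast h_{\End}$, a genuine smooth positive-definite metric since $\phi$ is injective and $h_{\End}$ is positive-definite; by the first lemma in the proof of Theorem~\ref{semi-negativity for harmonic bundles} (plurisubharmonicity of $\log\Vert\cdot\Vert$ for sections killed by the Higgs field), $h_{\log}$ has semi-negative curvature on $T_U$, because any local section $s$ of $T_U$ satisfies $\ad_\theta(\phi(s))=0$. To extend $h_{\log}$ across $D$ I would invoke Lemma~\ref{extension of metric with semi-negative curvature}: for a local section $s$ of $T_X(-\log D)$ the contraction $\tilde\phi(s)=\iota_s\theta$ is a holomorphic section of $\End(\cE^\diamond)$, hence of sub-polynomial growth and so a section of $\End(\cE)^\diamond$, and it lies in the kernel of $\ad_\theta$; the second lemma in the proof of Theorem~\ref{semi-negativity for harmonic bundles} then shows that $\Vert\iota_s\theta\Vert_{h_{\End}}$ is locally bounded near $D$, so that $\log\Vert s\Vert_{h_{\log}}$ is locally bounded from above. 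Lemma~\ref{extension of metric with semi-negative curvature} thus produces a unique singular hermitian metric with semi-negative curvature on $T_X(-\log D)$ extending $h_{\log}$.

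Finally, the dual metric is a singular hermitian metric with semi-positive curvature on $\Omega^1_X(\log D)=(T_X(-\log D))^\star$. At every point $x\in U$ the bundle is locally free and the metric is finite, being dual to the positive-definite $h_{\log}$, so Theorem~\ref{Paun-Takayama} shows that $\Omega^1_X(\log D)$ is weakly positive at $x$; as this holds on the dense open $U$, the sheaf is weakly positive in the sense of Viehweg. The part I expect to require the most care is the passage across $D$: one must check that $\iota_s\theta$ is honestly a section of the growth-prolongation $\End(\cE)^\diamond$ lying in $\ker(\ad_\theta)$, equivalently that the logarithmic extension of $\theta$ is compatible with the $\diamond$-prolongation of $\End(\cE)$, since it is precisely this that lets the kernel-boundedness of Theorem~\ref{semi-negativity for harmonic bundles} control the behaviour of $h_{\log}$ at infinity. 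By contrast the interior semi-negativity and the final application of P\u{a}un--Takayama are formal.
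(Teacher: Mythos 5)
Your route is genuinely different from the paper's in one respect: the paper forms the image $\cA$ of $\tilde\phi$ inside $\End(\cE)^{\diamond}$, invokes Mochizuki's correspondence (Theorem~\ref{Mochizuki correspondence}) to get that $(\End(\underline{\cE}),\Theta)$ is $\mu_{\cL}$-polystable with vanishing parabolic Chern classes, applies Theorem~\ref{main result} to conclude that $\cA^{\star}$ is weakly positive, and finishes via the generically surjective map $\cA^{\star}\arrow\Omega^1_X(\log D)$; you bypass polystability entirely and construct the semi-negatively curved metric on $T_X(-\log D)$ by hand, which is an attractive economy. However, the step you yourself single out as the delicate one is justified by a claim that is false, and this is a genuine gap. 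You assert that $\iota_s\theta$ ``is a holomorphic section of $\End(\cE^{\diamond})$, hence of sub-polynomial growth and so a section of $\End(\cE)^{\diamond}$.'' A sheaf endomorphism of the lattice $\cE^{\diamond}$ need \emph{not} have sub-polynomial $h_{\End}$-norm when the parabolic structure of $\underline{\cE}^{h}$ is non-trivial. Concretely, on $\Delta^{\star}$ take the direct sum of two rank-one tame harmonic bundles with metrics of norms $|z|^{a}$ and $|z|^{b}$, where $0\leq a<b<1$; then $\cE^{\diamond}=\cO e_1\oplus\cO e_2$ with $\Vert e_i\Vert_h\sim|z|^{a_i}$, and the section $e_1\otimes e_2^{\star}$ of $\End(\cE^{\diamond})$ has
\[ \Vert e_1\otimes e_2^{\star}\Vert_{h_{\End}}\sim|z|^{a-b}, \]
which is not sub-polynomial. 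So ``preserves $\cE^{\diamond}$'' does not place $\iota_s\theta$ in the growth prolongation $\End(\cE)^{\diamond}$, and without that you cannot apply the boundedness lemma from the proof of Theorem~\ref{semi-negativity for harmonic bundles} to extend $h_{\log}$ across $D$ — the whole passage at infinity collapses.

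What rescues the argument is a strictly stronger input, which is exactly how the paper legitimizes $\tilde\phi: T_X(-\log D)\arrow\End(\cE)^{\diamond}$: by Mochizuki's theorem the growth filtration makes $(\underline{\cE}^{h},\theta)$ a \emph{filtered regular meromorphic} Higgs bundle, i.e. $\theta$ preserves \emph{every} $\cE^{\alpha}$, $\alpha\in\bR^I$, not just the single lattice $\cE^{\diamond}$. An endomorphism preserving the full parabolic filtration does have sub-polynomial norm (by the Simpson--Mochizuki norm estimates; equivalently, the prolongation of $(\End(\cE),h_{\End})$ is the parabolic bundle $\End(\underline{\cE})$, and filtration-preserving endomorphisms lie in its weight-zero part), so $\iota_s\theta\in\End(\cE)^{\diamond}\cap\Ker(\ad_{\theta})$ and your extension step then goes through. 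Separately, a smaller inaccuracy: injectivity of $\phi$ as a map of sheaves does not give injectivity on fibres, so $h_{\log}=\phi^{\ast}h_{\End}$ may degenerate along a proper analytic subset of $U$, and at such points the dual metric is infinite; Theorem~\ref{Paun-Takayama} therefore applies only on the dense open subset of $U$ where $\phi$ has full rank on fibres. That still yields weak positivity in the sense of Viehweg, but your claim that the criterion applies ``at every point $x\in U$'' is not correct as stated.
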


\begin{proof}

Let $(\underline{\cE}, \theta) = (\underline{\cE}^{h}, \theta)$ be the filtered regular meromorphic Higgs bundle on $(X,D)$ associated to $(\cE, \theta, h)$ using the filtration according to the order of growth with respect to $h$, cf. Theorem \ref{tame harmonic metric are moderate}. The parabolic bundle $\End (\underline{\cE})$ inherits a canonical structure of filtered regular meromorphic Higgs bundle, whose Higgs field $\Theta$ is defined by
\begin{align*}
(\Theta_s( \Psi ))(v) = \theta_s ( \Psi (v)) - \Psi( \theta_s (v)) 
\end{align*}
for $\Psi $ a local holomorphic section of $\End(\cE)$, $v$ a local holomorphic section of $\cE$ and $s$ a local holomorphic section of $T_U$. Here we denote by $\theta_s $ the contraction of $\theta$ with $s$, alias $\phi(s)$ (and similarly for $\Theta_s$). \\ 

The composition of $\phi : T_U \arrow \End(\cE)$ with the Higgs field $\Theta : \End(\cE) \arrow  \Omega^1_U \otimes_{\mathscr{O}_U} \End(\cE)$ is zero: if $s$ and $t$ are local holomorphic sections of $ T_U$ and $v$ is a local holomorphic section of $\cE$, then the condition $\theta \wedge \theta = 0$ implies
 \begin{align*}
(\Theta_s( \theta_t ))(v) = \theta_s ( \theta_t (v)) - \theta_t( \theta_s (v)) = 0, 
\end{align*}
where $\phi(t) = \theta_t \in \End(\cE)$. As a consequence, if we denote by $\cA \subset \End (\cE)^{\diamond}$ the image of $\phi : T_X(- \log D) \arrow \End (\cE)^{\diamond} $, then $\cA$ is contained in the kernel of the Higgs field $\Theta :  \End(\cE)^{\diamond}  \arrow  \Omega^1_X(\log D) \otimes_{\mathscr{O}_X} \End(\cE)^{\diamond} $.\\

If $\cL$ is an ample line bundle on $X$, then by Theorem \ref{Mochizuki correspondence} the filtered regular meromorphic Higgs bundle $(\underline {\cE}, \theta)$ is $\mu_{\cL}$-polystable with vanishing parabolic Chern classes, and the same is true for $(\End(\underline {\cE}), \Theta)$. The subsheaf $\cA \subset \End(\cE)^{\diamond} $ is contained in the kernel of $\Theta$, hence by Theorem \ref{main result} its dual ${\cA}^{\star}$ is weakly positive. Since the map $ T_X(- \log D) \arrow \cA $ is generically an isomorphism by assumption, the same is true for the dual map ${\cA}^{\star} \arrow \Omega^1_X(\log D)$. It follows that $\Omega^1_X(\log D)$ is weakly positive.
\end{proof}

Let $X$ be a smooth projective complex variety and $D \subset X$ be a simple normal crossing divisor. A tame harmonic bundle $(\mathscr{E}, \nabla, h) \equiv (\cE, \theta, h)$ is called purely imaginary if the eigenvalues of the residues of $\theta$ along the irreducible components of $D$ are purely imaginary. It is equivalent to ask that the filtered regular meromorphic connection bundles $(\underline{\mathscr{E}}^{DM}, \nabla)$ and $(\underline{\mathscr{E}}^{h}, \nabla)$ are canonically isomorphic, cf. Lemma \ref{DM=h}. Recall that a complex local system on $X \backslash D$ is semisimple if and only if the corresponding connection bundle comes from a purely imaginary tame harmonic bundles on $X \backslash D$ \cite[Theorem 25.28]{Mochizuki_AMSII}. Also, if $X$ and $Y$ are smooth projective complex varieties, $D_X$ and $D_Y$ are simple normal crossing divisors of $X$ and $Y$ respectively and $f : X \arrow Y$ is a morphism such that $f^{-1}(D_Y) \subset D_X$, then 
for any tame harmonic bundle $(\cE,\theta ,h)$ on $Y \backslash D_Y$, its pullback $f^*(\cE,\theta ,h)$ is a tame harmonic bundle on $X \backslash D_X$. Moreover, if $(\cE,\theta ,h)$ is pure imaginary, then $f^* (\cE,\theta ,h)$ is also pure imaginary, cf. \cite[Lemma 25.29]{Mochizuki_AMSII} .

\begin{prop}
Let $X$ be a smooth projective complex variety, $D \subset X$ be a simple normal crossing divisor and $(\cE, \theta, h)$ be a pure imaginary tame harmonic bundle on $U := X \backslash D$. If the associated semisimple complex local system on $U$ is generically large and has discrete monodromy, then the map of $\cO_U$-modules $\phi : T_U \arrow \End(\cE)$ associated to $\theta \in \Omega^1_U(\End(\cE))$ is injective. 
\end{prop}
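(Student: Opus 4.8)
The plan is to show that the Higgs field map $\phi : T_U \arrow \End(\cE)$ is injective by a pointwise, generic argument exploiting the hypotheses of generic largeness and discreteness of the monodromy. Injectivity of a map of locally free sheaves on the irreducible variety $U$ is a closed condition whose failure is detected on the locus where $\ker \phi \neq 0$; since $\ker \phi$ is a coherent subsheaf, it suffices to show that $\phi$ is injective at a general point, or more precisely that the degeneracy locus $Z := \{x \in U \mid \phi_x \text{ is not injective}\}$ is not all of $U$. The key geometric idea is that $\ker \phi$ integrates to a foliation: if $s, t$ are local sections of $T_U$ with $\theta_s = \theta_t = 0$, then the flatness relation $\theta \wedge \theta = 0$ already used in the previous proposition, together with the harmonicity, forces the distribution $\ker \phi \subset T_U$ to be involutive. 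Therefore, were $\phi$ to have a nontrivial kernel on a Zariski-dense open subset, one would obtain a positive-dimensional holomorphic foliation on $U$ along whose leaves the Higgs field $\theta$ vanishes.

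The heart of the argument is then to contradict generic largeness. First I would pass to the leaves (or the local integral subvarieties) of the foliation $\ker \phi$. Along such a leaf, the Higgs field vanishes in the leaf directions, and because the harmonic bundle is pure imaginary the non-abelian Hodge correspondence (Theorem~\ref{Mochizuki correspondence}, together with Mochizuki's equivalence preserving the local system, cf.\ \cite[Theorem 25.28]{Mochizuki_AMSII}) identifies the underlying flat connection $\nabla = \partial_{\cE} + \overline{\partial}_{\cE} + \theta + \theta^{\star}$. The restriction of the flat bundle to a leaf therefore has restricted Higgs field equal to zero in those directions, and one should deduce that the monodromy of the local system is \emph{locally constant} transverse to the leaves—equivalently that the associated period/developing map to the symmetric space is constant along the leaves. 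Concretely, vanishing of $\theta$ along a direction means the harmonic map to the symmetric space $\Gl(n,\bC)/\mathrm{U}(n)$ (whose differential is governed by $\theta + \theta^{\star}$) is degenerate along that direction, so the harmonic map factors, at least locally, through a lower-dimensional quotient.

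To globalize this and reach a contradiction, I would use discreteness of the monodromy: with discrete monodromy $\Gamma$, the harmonic metric produces an equivariant harmonic map $\tilde{U} \arrow \Gl(n,\bC)/\mathrm{U}(n)$ whose rank drops exactly in the directions of $\ker \phi$, so it descends to a map that is constant along the foliation's leaves. Taking the closures of leaves (or applying a Stein-factorization / fibration argument on a suitable modification), one produces a proper map $f : Z \arrow X$ from a positive-dimensional $Z$ whose image is not contained in any countable union of proper subvarieties but along which $f^* L$ is trivial, since the developing map is constant there and the monodromy is discrete. This directly contradicts the definition of generic largeness, which guarantees $f^* L$ nontrivial for all such $f$ avoiding the exceptional countable family $\{D_i\}$. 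Hence $\ker \phi = 0$ generically, and by torsion-freeness $\phi$ is injective.

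The main obstacle I expect is making rigorous the step from ``$\theta$ vanishes along a foliation'' to ``the monodromy is trivial along the corresponding subvarieties.'' The subtlety is twofold: first, the leaves of a holomorphic foliation need not be algebraic, so one must either work with the Zariski closures of leaves or invoke an algebraicity/integrability result to produce an honest proper map $f : Z \arrow X$ whose image dodges the countable bad locus; second, one must verify that vanishing of the Higgs field in the leaf directions, for a \emph{pure imaginary} harmonic bundle, genuinely forces the \emph{flat} connection (not merely the Higgs field) to be trivial transversally, which requires unwinding the relation between $\theta$, $\theta^{\star}$ and $\nabla$ and using that flat sections are recovered from harmonic data. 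The pure-imaginary hypothesis is essential precisely here, as it is what ties the filtered Higgs picture back to the Deligne--Manin flat bundle and hence to the actual local system $L$, via Lemma~\ref{DM=h}.
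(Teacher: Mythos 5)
Your proposal has a genuine gap at exactly the point you flag as the ``main obstacle,'' and that obstacle is not a technicality one can route around within your strategy. The definition of generic largeness only quantifies over \emph{algebraic} data: proper maps $f : Z \arrow U$ from smooth quasi-projective varieties whose image avoids the countable family $\{D_i\}$. The leaves of the holomorphic foliation you build out of $\ker \phi$ are transcendental objects; a generic leaf of a holomorphic foliation on a quasi-projective variety is typically Zariski-dense, so its Zariski closure is all of $U$, and there is no algebraic integrability theorem available here that would let you replace a leaf by a positive-dimensional algebraic subvariety along which $\theta$ still vanishes. Consequently the map $f : Z \arrow X$ you need in the last step of your argument simply may not exist, and the contradiction with generic largeness cannot be reached. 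Two further steps are also unjustified as stated: (a) involutivity of $\ker \phi$ is not a formal consequence of $\theta \wedge \theta = 0$ ``together with harmonicity''---it is a nontrivial input from the factorization theory of pluriharmonic maps; and (b) vanishing of $\theta$ (hence of $\theta^{\star}$) along leaf directions only makes the flat connection agree with the Chern connection there, so the leafwise monodromy is unitary and discrete, hence \emph{finite}, not trivial; to contradict generic largeness one must additionally pass to a finite \'etale cover trivializing it.

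The paper's proof avoids all of this by never leaving the algebraic category and never integrating anything. Fix a point $x$ outside the union of the $D_i$ and of the locus $Y$ where the cokernel of $\phi$ fails to be locally free, and fix a tangent vector $s \in T_X(x)$. Choose a complete intersection curve $C \subset U$ through $x$ with $T_C(x) = \bC \cdot s$, and let $f : \tilde{C} \arrow X$ be its normalization. Since $C$ is already an algebraic curve mapping properly to $U$ and not contained in $\cup D_i$, generic largeness applies directly and gives that $f^{\ast}L$ has infinite (and discrete) monodromy. The easy lemma proved just before the proposition (for a pure imaginary tame harmonic bundle, the monodromy is finite if and only if it is discrete and the Higgs field vanishes) then forces the Higgs field of $f^{\ast}(\cE, \theta, h)$, i.e.\ the composition $T_{\tilde{C}} \arrow f^{\ast}T_U \arrow f^{\ast}\End(\cE)$, to be nonzero as a sheaf map on $\tilde{C}$; and because $x \notin Y$, the cokernel of this map is locally free near $x$, so nonvanishing as a sheaf map upgrades to nonvanishing of the fiber map at $x$, i.e.\ $\phi(s) \neq 0$. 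This pointwise statement at a general point, together with torsion-freeness of $T_U$, gives injectivity of $\phi$. Note that the curve plays the role your leaves were meant to play, but its algebraicity is built in from the start, and the direction of the argument is reversed: rather than assuming a kernel and seeking a contradiction, one directly verifies injectivity direction by direction.
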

Before giving the proof of the proposition, let us prove the following easy lemma.

\begin{lem}
Let $(\cE, \theta, h)$ be a pure imaginary tame harmonic bundle on a smooth quasi-projective complex variety $U := X \backslash D$, and let $L$ be the associated semisimple complex local system on $U$. Then the monodromy of $L$ is finite if and only if it is discrete and the Higgs field $\theta$ is zero.
\end{lem}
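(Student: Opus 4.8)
The plan is to run everything through the canonical decomposition $\nabla = \nabla^u + \Psi$ of the flat connection underlying the harmonic bundle, where $\nabla^u$ is unitary for the (positive-definite) harmonic metric $h$ and $\Psi = \theta + \theta^\star$ is self-adjoint. The pivotal observation is that $\theta = 0$ is equivalent to $\Psi = 0$: indeed $\theta = 0$ forces $\theta^\star = 0$ hence $\Psi = 0$, and conversely $\Psi = 0$ means $\theta + \theta^\star = 0$ with $\theta$ of type $(1,0)$ and $\theta^\star$ of type $(0,1)$, so both vanish. Thus $\theta = 0$ is exactly the statement that $\nabla = \nabla^u$ is $h$-unitary, i.e.\ that $L$ is a unitary local system. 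Both implications of the lemma will be obtained from this equivalence.

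For the direction (discrete monodromy and $\theta = 0$) $\Rightarrow$ (finite monodromy), I would argue as follows. If $\theta = 0$ then $\Psi = 0$, so $\nabla$ preserves the positive-definite metric $h$; parallel transport is therefore $h$-unitary, and after identifying a fixed fibre with $\bC^n$ the monodromy group $\Gamma := \rho(\pi_1(U))$ lands in the compact group $U(n)$. By hypothesis $\Gamma$ is discrete in $\mathrm{GL}_n(\bC)$, hence discrete in $U(n)$ for the subspace topology. Since a discrete subgroup of a Hausdorff topological group is closed, $\Gamma$ is a closed subset of the compact group $U(n)$, hence compact; a compact discrete set is finite, so $\Gamma$ is finite.

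For the converse, finite monodromy is trivially discrete, so the only content is $\theta = 0$. A finite monodromy representation is unitarizable, by averaging any Hermitian inner product over the finite image, so $L$ is a unitary local system; its flat unitary metric is pluriharmonic with $\nabla = \nabla^u$ and $\Psi = 0$, hence defines a harmonic bundle with vanishing Higgs field. Because the local monodromies have finite order, their eigenvalues are roots of unity, so the residues of this Higgs field are zero (in particular purely imaginary) and the harmonic bundle is tame and pure imaginary, with parabolic weights determined by those eigenvalues. By the uniqueness statement in Mochizuki's correspondence (Theorem \ref{Mochizuki correspondence}), which matches the semisimple local system $L$ with a \emph{unique} pure imaginary tame harmonic bundle, this unitary model coincides with the given $(\cE, \theta, h)$, whence $\theta = 0$.

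The main obstacle is the bookkeeping in the converse: one must verify that the flat unitary metric attached to the finite monodromy representation genuinely defines a pure imaginary tame harmonic bundle, so that the uniqueness in the non-abelian Hodge correspondence applies and pins down $\theta$. An alternative that avoids this verification is to pass to the finite \'etale cover $U' \to U$ trivializing the monodromy, compactify it compatibly, and invoke the pullback compatibility for pure imaginary tame harmonic bundles (cf.\ \cite{Mochizuki_AMSII}); there $\pi^\ast\theta = 0$ is immediate from triviality, and since $\pi$ is \'etale this yields $\theta = 0$ on $U$.
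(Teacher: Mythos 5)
Your proof is correct, and one direction of it genuinely differs from the paper's. For (discrete $+$ $\theta=0$) $\Rightarrow$ finite, you argue as the paper does: $\theta=0$ forces $\Psi=\theta+\theta^\star=0$, so $\nabla=\nabla^u$ is $h$-unitary and the monodromy is a discrete subgroup of a compact group, hence finite (the paper compresses this into ``$h$ is parallel for the Chern connection''; your type-decomposition argument and the closedness of discrete subgroups supply the details). For finite $\Rightarrow$ $\theta=0$, however, the paper pulls $L$ back to a finite \'etale cover where it is trivial, asserts the pulled-back Higgs field vanishes, and descends; your main route stays on $U$: unitarize by averaging, note that the resulting $\nabla$-flat metric is a tame pure imaginary pluriharmonic metric with zero Higgs field, and invoke uniqueness of such a metric to identify it with $h$ up to an ambiguity that cannot create a nonzero $\theta$. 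This makes explicit the appeal to uniqueness that the paper's ``therefore the pullback of the Higgs field is zero'' uses silently. Two caveats. First, the uniqueness you need is that of the pure imaginary tame pluriharmonic metric attached to a semisimple local system, i.e.\ \cite[Theorem 25.28]{Mochizuki_AMSII} as cited in this section of the paper, not literally Theorem \ref{Mochizuki correspondence}; to use the latter you must first observe that both metrics induce the Deligne--Manin parabolic structure (Lemma \ref{DM=h}), so that they are adapted to the same filtered regular meromorphic connection bundle, and that the ``obvious ambiguity'' only conjugates $\theta$ by a flat automorphism, preserving its vanishing. Second, in your alternative sketch the claim that $\pi^\ast\theta=0$ ``is immediate from triviality'' is not immediate: it is again the uniqueness statement (a tame pure imaginary pluriharmonic metric on the trivial flat bundle is parallel up to flat automorphism) --- the same compression the paper itself makes.
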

\begin{proof}
If the monodromy of $L$ is finite, then the pullback of $L$ to a finite \'etale cover of $U$ is trivial. Therefore the pullback of the Higgs field by a finite \'etale cover is zero, hence $\theta$ is zero itself. For the converse, first observe that given a harmonic bundle $(\cE, \theta, h)$ on a complex manifold, the Higgs field $\theta$ is zero if and only if the pluriharmonic metric $h$ is parallel for the Chern connection of $(\cE,h)$. Therefore, the monodromy being unitary and discrete, it has to be finite.
\end{proof}

\begin{proof}[Proof of the Proposition]
By assumption, there exist countably many closed subvarieties $D_i \subsetneq U$ such that for every smooth quasi-projective complex variety $Z$ equipped with a proper map $f : Z \arrow U$ satisfying $f(Z) \not \subset \cup D_i$ the pullback local system $f^\ast L$ is non-trivial (and has in fact infinite monodromy). Let also $Y \subsetneq U$ be the closed subvariety where the cokernel of the map of $\cO_U$-modules $\phi : T_U \arrow \End(\cE)$ is not locally free. \\
Fix $x \in X \backslash (\cup D_i \cup Y)$ and let us show that $\phi (s) \neq 0$ for all $s \in T_X(x) \backslash \{ 0 \}$. Let $C \subset U$ be a complete intersection curve containing $x$ with $ T_C(x) = \bC \cdot s \subset  T_X(x)$, let $\nu : \tilde{C} \arrow C$ be its normalization and $f : \tilde{C} \arrow X$ be the composition of $\nu$ with the inclusion $C \subset X$. Since $f(\tilde{C}) = C \not \subset \cup D_i$, the monodromy of the local system $f^\ast L$ is infinite and discrete. By the preceding Lemma, it follows that the Higgs field of the tame harmonic bundle $f^*(\cE,\theta ,h)$ is nonzero. In other words, the composition $T_C \arrow f^{\ast} T_U \arrow f^{\ast}  \End(\cE)$ is nonzero. Since in restriction to a neighborhood of $x \in C$ the cokernel of the map $T_C \arrow f^{\ast}  \End(\cE)$ is locally free, the image of $v$ is necessarily nonzero, i.e. $\phi(s) \neq 0$.

\end{proof}


\bibliographystyle{alpha}
\bibliography{biblio}

\vspace{0.5cm}

\textsc{Yohan Brunebarbe, Institut f\"ur Mathematik, Universit\"at Z\"urich, Winterthurerstrasse 190, CH-8057 Z\"urich, Schweiz} \par\nopagebreak
  \textit{E-mail address}: \texttt{yohan.brunebarbe@math.uzh.ch}

\end{document}